\newcommand\be{\begin{equation}}
\newcommand\ee{\end{equation}}
\newcommand\bea{\begin{eqnarray}}
\newcommand\eea{\end{eqnarray}}
\newcommand\beaa{\begin{eqnarray*}}
\newcommand\eeaa{\end{eqnarray*}}
\newcommand\beba{\begin{equation}\left\{\begin{array}{rcl}}
\newcommand\eeba{\end{array}\right.\end{equation}}
\newcommand\bebaa{\begin{equation*}\left\{\begin{array}{rcl}}
\newcommand\eebaa{\end{array}\right.\end{equation*}}
\newcommand\red{\color{red}}
\newcommand\bR{{\mathbb{R}}}
\newcommand\bN{{\mathbb{N}}}
\newcommand\uphi{{\overline{\phi}}}
\newcommand\lphi{{\underline{\phi}}}
\newcommand\upsi{{\overline{\psi}}}
\newcommand\lpsi{{\underline{\psi}}}
\newcommand{\ep}{\epsilon}
\newcommand{\dint}{\displaystyle \int}
\newcommand{\dsum}{\displaystyle \sum}
\newcommand{\dlim}{\displaystyle \lim}
\newtheorem{theorem}{Theorem}[section]
\newtheorem{lemma}[theorem]{Lemma}
\newtheorem{corollary}[theorem]{Corollary}
\newtheorem{definition}[theorem]{Definition}
\newtheorem{remark}[theorem]{Remark}
\newtheorem{proposition}[theorem]{Proposition}
\numberwithin{equation}{section}
\begin{document}
\nolinenumbers

\title[Propagating front solutions]{Propagating front solutions in a time-fractional Fisher-KPP equation}

\author[H. Ishii]{Hiroshi Ishii}
\address{Research Center of Mathematics for Social Creativity, Research Institute for Electronic Science, Hokkaido University, Hokkaido, 060-0812, Japan}
\email{hiroshi.ishii@es.hokudai.ac.jp}

\thanks{Date: \today. Corresponding author: H. Ishii}

\thanks{{\em Keywords.} Caputo derivative, Propagation phenomena, Traveling wave solutions, Fisher-KPP equation, Memory effect}
\thanks{{\em 2020 MSC} Primary 35R11,\ Secondary 35B40,\ 35C07}

\begin{abstract}
In this paper, we treat the Fisher-KPP equation with a Caputo-type time fractional derivative and discuss the propagation speed of the solution.
The equation is a mathematical model that describes the processes of sub-diffusion, proliferation, and saturation.
We first consider a traveling wave solution in studying the propagation of the solution,
but we cannot define it in the usual sense, owing to the time fractional derivative in the equation.
We therefore assume that the solution asymptotically approaches a traveling wave solution,
and the asymptotic traveling wave solution is formally introduced as a potential asymptotic form of the
solution.
The existence and properties of the asymptotic traveling wave solution are discussed using a monotone iteration method.
Finally, the behavior of the solution is analyzed by conducting numerical simulations based on the results for asymptotic traveling wave solutions.
\end{abstract}

\maketitle
%%%%%%%%%%%%%%%%%%%%%%%%%%%%%%%%%
\setcounter{tocdepth}{3}

%%%%%%%%%%%%%%%%%%%%
\section{Introduction}
In this paper, we consider the propagation of solutions to a time-fractional evolution equation
\be\label{eq:main}
\partial^{\alpha}_{t} u = u_{xx}+ f(u) \quad (t>0,\ x\in\bR),
\ee
where $u=u(t,x)\in\bR\ (t>0,\ x\in\bR)$, and $f:\bR\to\bR$ is
a monostable nonlinearity
\beaa
f\in C^1(\bR),\quad f(0)=f(1)=0,\quad f(u)>0\quad (0<u<1),
\eeaa
satisfying the KPP-type condition (named after Kolmogorov, Petrovsky, and Pisku-nov \cite{KPP})
\beaa
f(u)\le f'(0)u\ (0<u<1).
\eeaa
One example is $f(u)=u(1-u)$.
Further, $\partial^{\alpha}_{t}$ is a Caputo derivative defined as
\beaa
\partial^{\alpha}_{t} u (t,x):= \dfrac{1}{\Gamma(1-\alpha)} \int^{t}_{0} \dfrac{\partial u}{\partial s}(s,x) \dfrac{ds}{(t-s)^{\alpha}},
\eeaa
where $\alpha\in (0,1)$ and $\Gamma(1-\alpha)$ is the gamma function with order $1-\alpha$.

%%%

\medskip
The equation with $\alpha=1$ corresponds to the Fisher-KPP equation
\be\label{eq:Fisher}
u_t = u_{xx} + f(u)\quad (t>0,\ x\in\bR),
\ee
which is a well-known mathematical model used in the study of population dynamics \cite{Fisher, Fisher2, KPP}.
A typical structure of propagation phenomena is the traveling wave solution.
In the general case, a solution $u$ is called a traveling wave solution if there exist a speed $c>0$ and a wave profile $\phi\in C(\bR)$ such that $u(t,x)=\phi(x+ct)$.
In the case of the Fisher-KPP equation,
by substituting this expression into the equation and introducing the moving coordinate $\xi:=x+ct$,
$(c,\phi(\xi))$ must satisfy
\beaa
c\phi'(\xi) = \phi''(\xi) + f(\phi(\xi))\quad (\xi\in\bR).
\eeaa
As a pioneering work, Fisher \cite{Fisher2} considered the case that $f(u)=u(1-u)$ and found that there is $c^{*}_1>0$ such that \eqref{eq:Fisher} has a monotone traveling wave solution connecting from $0$ to $1$ when $c\ge c^{*}_1$, and no such solution when $c\in (0,c^{*}_1)$.
In addition, Kolmogorov, Petrovsky, and Piskunov \cite{KPP} revealed the existence of a traveling wave solution and its long-time behavior under more general conditions for $f(u)$.
Building on this work, studies have expanded into various areas, such as traveling wave solutions (e.g. \cite{Ma}),
long-time behavior (e.g. \cite{AW}), and entire solutions (e.g. \cite{HN}).
Here, we note that $c^{*}_1=2\sqrt{f'(0)}$ holds in the case of KPP-type nonlinearity,
and it is known that the propagation of the solution proceeds with the asymptotic highest term $c^{*}_1 t$ under suitable conditions \cite{AW,KPP}.

%%%

\medskip
Meanwhile, research on different diffusion processes has expanded into various contexts.
One extension is a spatially nonlocal model of diffusion
\be\label{eq:non}
u_t = D[u] + f(u)\quad (t>0,\ x\in\bR).
\ee
Examples of the diffusion term $D[u]$ are the fractional Laplacian \cite{BV}:
\beaa
D[u] = -(-\Delta)^{s} u\quad (0<s<1)
\eeaa
and nonlocal diffusion \cite{AMRT, HMMV}:
\beaa
D[u] = \int_{\bR} K(x-y)(u(t,y)-u(t,x))dy\quad (K\in L^1(\bR)).
\eeaa
In the case of nonlocal diffusion with a rapidly decaying kernel $K(x)$,
it has been established that results of traveling wave solutions similar to \eqref{eq:Fisher} can be obtained \cite{CG, CD, EGIW, SLW, Ya}.
In the case of the fractional Laplacian and nonlocal diffusion with slowly decaying kernels,
there has been widespread analysis of traveling wave solutions and solution propagation \cite{BCC, BFRW, GZ, VNK, VNN-F}.
In such cases with a monostable nonlinearity such as $f(u)=u(1-u)$,
it is known that \eqref{eq:non} has no traveling wave solutions \cite{CR, Ya}.
Moreover, the long-time behavior of the solution has been studied,
revealing that the solution propagates more rapidly than $O(t)$ \cite{CR, Garnier}.
As it is not trivial that solution propagation occurs at $O(t)$ as the cited studies show, it is sometimes questionable whether it is appropriate to consider traveling wave solutions.

%%%

\medskip
Returning to \eqref{eq:main},
the diffusion equation is modified to include a time-fractional derivative,
\beaa
\partial^{\alpha}_t u = u_{xx}\quad (t>0,\ x\in\bR),
\eeaa
which is called the diffusion-wave equation or the time-fractional diffusion equation.
The fundamental solution $G_{\alpha}(t,x)$ is defined by
\beaa
G_{\alpha}(t,x) := \dfrac{1}{\sqrt{\pi} |x| } H^{2,0}_{1,2}
\left[ \dfrac{x^2}{4t^{\alpha}} \middle|
\begin{matrix}
 \left( 1, \alpha \right) & \\
(1/2,1)  & \left( 1 , 1 \right)
\end{matrix}
\right],
\eeaa
where $H^{\mu,\nu}_{p,q}(z)$ is Fox's H-function \cite{EK}.
According to the properties of Fox's H function,
the spatial decay rate is given by
\beaa
G_{\alpha}\left(1,\dfrac{2x}{\alpha}\right) \simeq C_{\alpha} x^{(\alpha-1)/(2-\alpha)} \exp\left( -\dfrac{2-\alpha}{\alpha} x^{2/(2-\alpha)} \right)
\eeaa
(see \cite{EK, Mainardi}).
It is known that the diffusion is slower than local diffusion \cite{EK, KRY}.
In fact, as $G_{\alpha}$ satisfies $G_{\alpha}(t,x) = t^{-\alpha/2} G_{\alpha}(1, t^{-\alpha/2}x)$,
the variance of the fundamental solution is
\beaa
\int_{\bR} x^2 G_{\alpha}(t,x)dx = \left( \int_{\bR} x^2 G_{\alpha}(1,t^{-\alpha/2}x)dx \right) t^{-\alpha/2}  = \left(\int_{\bR} x^2 G_{\alpha}(1,x)dx\right) t^{\alpha}.
\eeaa
Therefore, the variance grows at a rate slower than $O(t)$ for the usual diffusion equation for sufficiently large times $t>0$.
In addition,  it becomes slower as $\alpha$ decreases.
To analyze phenomena with such sub-diffusion and nonlinearity,
Equation \eqref{eq:main} and similar models have been proposed (e.g., \cite{Fedotov, FSSS, FSSS2, HLW, VNN}). In particular, model \eqref{eq:main} was proposed by \cite{HLW}
as a model of anomalous diffusion with linear reaction dynamics.
In our context with $f(u)=u(1-u)$, model \eqref{eq:main} is a natural extension of their model
with a newly added saturation effect.

%%%

\medskip
In analyzing the solution to equation \eqref{eq:main}, we should keep in mind that the dynamical properties change owing to the time-fractional derivative.
As an example, let us consider the time-fractional Malthus model as the simplest model:
\beaa
\partial^{\alpha}_t v = \zeta v \quad (t>0),
\eeaa
where $v=v(t)\in\bR$, and $\zeta$ is a non-zero constant.
The solution is given by
\beaa
v(t) = v(0) E_{\alpha,1}( \zeta  t^{\alpha}),
\eeaa
where $E_{\alpha,\beta}(z):= \dsum^{\infty}_{k=0}\dfrac{z^{k}}{\Gamma(\alpha k + \beta)}$ is the Mittag--Leffler function \cite{HMS, KRY}.
Moreover, the asymptotic behavior of $E_{\alpha,1}$ is
\be\label{mittag}
E_{\alpha,1}(z) \simeq \dfrac{1}{\alpha} \exp(z^{1/\alpha}) - \dfrac{1}{z \Gamma(1-\alpha)} + O(z^{-2})\quad (|z|\to+\infty)
\ee
(see Section 6 in \cite{HMS}).
Hence, $v(t)$ converges to $0$ with polynomial order when $\zeta<0$, and grows exponentially when $\zeta>0$.
These properties often appear in time-fractional nonlinear equations when analyzing the decay and shape of characteristic solutions (e.g. \cite{KRY, VNN}).

%%%
\medskip
In this paper, we consider propagating solutions of the front type,
such as the traveling wave solutions we have been discussing so far.
Long-time behavior has been considered in various settings for propagation and saturation processes
under slow diffusion \cite{Fedotov, FM, FSSS, FSSS2}.
However, to the best of our knowledge, no characterization of propagation solutions based on the discussion of traveling wave solutions has been done for equation \eqref{eq:main} with a monostable nonlinearity.
Therefore, this paper considers the relationship between the solution behavior and $\alpha$
by introducing the notion of a suitable solution like a traveling wave solution.

%%%

\medskip
The paper is organized as follows.
In Section 2, we introduce the concept of the traveling wave structure, termed the asymptotic traveling wave solution, and present the main result regarding the existence.
The proof is given in Section 3.
We then present numerical simulations under appropriate conditions in Section 4.
Finally, we highlight unresolved issues in Section 5.

%%%
\bigskip
\section{Main results}

In analyzing the propagation of the solution, the concept of a traveling wave solution is considered.
However, the usual method of obtaining traveling wave solutions cannot be immediately applied in the case of time-fractional equations.
For $c>0$ and $\phi\in C^2(\bR)$, the Caputo derivative of a traveling wave solution $u(t,x)=\phi(x+ct)$ is
\beaa
\partial^{\alpha}_{t} u(t,x) &=& \dfrac{c}{\Gamma(1-\alpha)} \int^{t}_{0} \dfrac{\phi'(x+cs)}{(t-s)^{\alpha}} ds \\
&=& \dfrac{c}{\Gamma(1-\alpha)} \int^{t}_{0} \dfrac{\phi'(x+c(t-s))}{s^{\alpha}} ds \\
&=& \dfrac{c^{\alpha}}{\Gamma(1-\alpha)} \int^{ct}_{0} \dfrac{\phi'(x+ct-s)}{s^{\alpha}} ds.
\eeaa
We recall the moving coordinate $\xi=x+ct$ to obtain
\be\label{eq:Caputo}
\partial^{\alpha}_{t} u(t,x)= \dfrac{c^{\alpha}}{\Gamma(1-\alpha)} \int^{ct}_{0} \dfrac{\phi'(\xi-s)}{s^{\alpha}} ds.
\ee
This means that the term coming out of the time derivative depends on $t$.
Hence, it is impossible to define a traveling wave solution in the classical sense.

%%%
On the other hand, in \cite{VNN}, they considered traveling wave solutions whose solutions might be approached asymptotically  to investigate the final state of a front-type solution that connects different stable states.
Based on this idea, we define an asymptotic traveling wave solution to analyze the final state.
That is to say, the asymptotic traveling wave solution is given in the following form by setting $t\to+\infty$ in \eqref{eq:Caputo}.
%%%
\begin{definition}
$(c,\phi)\in(0,+\infty)\times C^2(\bR)$ is an {\bf asymptotic traveling wave solution} of \eqref{eq:main} if it satisfies
\be\label{eq:tw}
\dfrac{c^{\alpha}}{\Gamma(1-\alpha)} \int^{+\infty}_{0} \dfrac{\phi'(\xi-s)}{s^{\alpha}} ds = \phi''(\xi) + f(\phi (\xi))
\ee
for all $\xi\in\bR$.
\end{definition}
In particular, we focus on a front-type asymptotic traveling wave solution connecting two constant states:
\be\label{boundary}
\phi(-\infty)=0,\quad \phi(+\infty)=1.
\ee

%%%%%%%
Studies relating to equation \eqref{eq:tw} have been reported.
In \cite{NVN}, the bistable nonlinear term $f(u)$, which is piecewise linear, was considered,
and an attempt was made to analyze $(c,\phi)$ by describing its solution connecting two stable states explicitly.
Furthermore, in \cite{ACH, CA}, traveling wave solutions in a nonlocal Korteweg–de Vries–Burgers equation,
which are analogous to equation \eqref{eq:tw},
were considered,
and their existence was explored by investigating the properties of the linear equation and constructing exponentially decaying solutions.
In the case of the monostable nonlinear term $f(u)$ that we deal with,
there are no results on the existence of solutions to equation \eqref{eq:tw} to the best of our knowledge.

%%%%%%
Thus, first consider the existence of the asymptotic traveling wave solutions.
Before stating the main theorem, we give the condition for $f(u)$ as
\be\label{ass:non}
\begin{cases}
f\in C^{1}(\bR),\ f(0)=f(1)=0, \\
\exists M>0,\ \exists a>0\ \mathrm{s.t.}\ -Mu^{1+a} \le f(u) - f'(0) u \le 0\ (0\le u\le 1).
\end{cases}
\ee
The condition includes the case that $f(u)=u(1-u)$.
We then obtain the following theorem.

%%%
\begin{theorem}\label{thm:main}
Let $\alpha\in(0,1)$ and $c^{*}_{\alpha}:= \dfrac{2^{1/\alpha}}{\sqrt{\alpha}} \left( \dfrac{f'(0)}{2-\alpha} \right)^{(2-\alpha)/2\alpha}$.
Then, for any $c\ge c^{*}_{\alpha}$ there exists an asymptotic traveling wave solution of \eqref{eq:main} with an increasing function $\phi$ satisfying \eqref{boundary}.
Furthermore, for $c>c^{*}_{\alpha}$, it can be normalized by
\beaa
\dlim_{\xi\to-\infty} e^{-\lambda_1\xi} \phi(\xi) = 1,
\eeaa
where $\lambda_1$ is the minimal positive root of $\lambda^2 - (c\lambda)^{\alpha}+f'(0)=0$.
\end{theorem}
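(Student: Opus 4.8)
The plan is to construct the front by a monotone iteration between an explicit subsolution and supersolution, starting from the behaviour of the linearisation near $\xi=-\infty$. Writing $\mathcal{D}^{\alpha}\phi(\xi):=\dfrac{c^{\alpha}}{\Gamma(1-\alpha)}\dint_{0}^{\infty}\dfrac{\phi'(\xi-s)}{s^{\alpha}}\,ds$, the substitution $\phi(\xi)=e^{\lambda\xi}$ with $\lambda>0$ together with $\dint_{0}^{\infty}s^{-\alpha}e^{-\lambda s}\,ds=\Gamma(1-\alpha)\lambda^{\alpha-1}$ turns the nonlocal term into $(c\lambda)^{\alpha}e^{\lambda\xi}$, so the relevant characteristic function is $g(\lambda):=\lambda^{2}-(c\lambda)^{\alpha}+f'(0)$. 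One checks $g$ is strictly convex on $(0,\infty)$ with $g(0^{+})=f'(0)>0$, so it has a unique minimiser $\lambda_{*}=(\alpha c^{\alpha}/2)^{1/(2-\alpha)}$, and $g(\lambda_{*})\le 0$ is equivalent to $c\ge c^{*}_{\alpha}$. Hence for $c\ge c^{*}_{\alpha}$ there are positive roots $0<\lambda_{1}\le\lambda_{2}$, with $\lambda_{1}<\lambda_{2}$ iff $c>c^{*}_{\alpha}$, and $g<0$ on $(\lambda_{1},\lambda_{2})$; this is where the threshold $c^{*}_{\alpha}$ comes from.

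For $c>c^{*}_{\alpha}$ I would take $\uphi(\xi):=\min\{e^{\lambda_{1}\xi},1\}$ and $\lphi(\xi):=\max\{e^{\lambda_{1}\xi}-q\,e^{(\lambda_{1}+\eta)\xi},0\}$, with $0<\eta<\min\{a\lambda_{1},\lambda_{2}-\lambda_{1}\}$ and $q>1$ large, and verify the sub/supersolution inequalities for $\cN[\phi]:=\phi''+f(\phi)-\mathcal{D}^{\alpha}\phi$. On $\{e^{\lambda_{1}\xi}<1\}$, the identities $\mathcal{D}^{\alpha}e^{\lambda_{1}\xi}=(c\lambda_{1})^{\alpha}e^{\lambda_{1}\xi}$ and $g(\lambda_{1})=0$ give $\cN[\uphi]=f(e^{\lambda_{1}\xi})-f'(0)e^{\lambda_{1}\xi}\le 0$ by the KPP bound, while on $\{\uphi\equiv1\}$ one has $\cN[\uphi]=-\mathcal{D}^{\alpha}\uphi\le 0$ since $\uphi$ is nondecreasing. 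For $\lphi$, on $\{\lphi>0\}=\{\xi<\xi_{0}\}$ the operator $\mathcal{D}^{\alpha}$ hits both exponentials and, using $f(u)\ge f'(0)u-Mu^{1+a}$ from \eqref{ass:non}, one obtains $\cN[\lphi]\ge -q\,g(\lambda_{1}+\eta)\,e^{(\lambda_{1}+\eta)\xi}-M e^{(1+a)\lambda_{1}\xi}\ge 0$ once $\eta<a\lambda_{1}$ is imposed and $q$ is taken large (here $g(\lambda_{1}+\eta)<0$ is used); on $\{\xi>\xi_{0}\}$ one checks directly that $\mathcal{D}^{\alpha}\lphi\le 0$ — the total mass of $\lphi'$ on $(-\infty,\xi_{0})$ vanishes while the kernel $s^{-\alpha}$ gives more weight to the (later) negative part of $\lphi'$, a Chebyshev-type rearrangement estimate — so $\cN[\lphi]=-\mathcal{D}^{\alpha}\lphi\ge 0$. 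At the two breakpoints $\uphi$ is concave and $\lphi$ convex, so the singular parts of $\uphi''$, $\lphi''$ carry the favourable sign and the inequalities persist in the distributional sense; finally $0\le\lphi\le\uphi\le1$.

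Next I would set up the iteration. The operator $\mathcal{A}:=\pa_{\xi}^{2}-\mathcal{D}^{\alpha}$ obeys a maximum principle (at an interior maximum $\pa_{\xi}^{2}\phi\le0$ and $\mathcal{D}^{\alpha}\phi\ge0$), so for $\beta>\sup_{[0,1]}|f'|$ the resolvent $(\beta-\mathcal{A})^{-1}$ is order preserving on $C_{b}(\bR)$ and commutes with translations; writing $\Phi[\psi]:=(\beta-\mathcal{A})^{-1}(\beta\psi+f(\psi))$, the sub/supersolution inequalities read $\Phi[\lphi]\ge\lphi$, $\Phi[\uphi]\le\uphi$, and $\Phi$ is monotone on $[\lphi,\uphi]$ and preserves monotonicity in $\xi$. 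Thus $\Phi^{n}[\uphi]$ decreases to a fixed point $\phi$ with $\lphi\le\phi\le\uphi$, nondecreasing, which an elliptic bootstrap (the $\pa_{\xi}^{2}$ term controls two derivatives, $\mathcal{D}^{\alpha}$ being of lower order) upgrades to a classical $C^{2}$ solution. From $0\le\lphi\le\phi\le\uphi$ and $\uphi(-\infty)=0$ we get $\phi(-\infty)=0$; being nondecreasing, bounded, and nontrivial, $\phi$ has a limit $L\in(0,1]$ at $+\infty$, and sending $\xi\to+\infty$ in the equation (the translates converge in $C^{2}_{\mathrm{loc}}$ to the constant $L$, and $\mathcal{D}^{\alpha}$ of the translates tends to $0$ by splitting the $s$-integral and using $\int_{\bR}\phi'=1$) forces $f(L)=0$, hence $L=1$. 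Strict monotonicity comes from the strong maximum principle applied to $\psi=\phi'\ge0$, which solves $\psi''-\mathcal{D}^{\alpha}\psi+f'(\phi)\psi=0$. For $c>c^{*}_{\alpha}$, dividing $\lphi\le\phi\le\uphi$ by $e^{\lambda_{1}\xi}$ and letting $\xi\to-\infty$ yields the normalisation $\dlim_{\xi\to-\infty}e^{-\lambda_{1}\xi}\phi(\xi)=1$.

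For the critical speed $c=c^{*}_{\alpha}$ the subsolution degenerates ($g\ge0$), so I would instead take $c_{n}\downarrow c^{*}_{\alpha}$, normalise the fronts $\phi_{n}$ by $\phi_{n}(0)=1/2$, and pass to the limit: since $\lambda_{1}(c_{n})\to\lambda_{*}>0$ stays bounded away from $0$, the tails of the $\phi_{n}$ decay uniformly, giving uniform $W^{2,1}_{\mathrm{loc}}$ — hence $C^{1}_{\mathrm{loc}}$, then $C^{2,\gamma}_{\mathrm{loc}}$ — bounds, and Arzelà–Ascoli produces a limit solving \eqref{eq:tw} at $c=c^{*}_{\alpha}$, nondecreasing with $\phi(0)=1/2$, to which the argument of the previous paragraph again assigns the boundary values $0$ and $1$. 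The hard part will be the subsolution estimate: one must tune $\eta$ and $q$ so that the correction term dominates the nonlinear defect $Mu^{1+a}$, and — less obviously — control the sign of the nonlocal term $\mathcal{D}^{\alpha}\lphi$ on the region where $\lphi$ has been truncated to $0$. A secondary structural point is justifying that the resolvent of the nonlocal operator $\pa_{\xi}^{2}-\mathcal{D}^{\alpha}$ is order preserving, and establishing the uniform compactness needed to treat $c=c^{*}_{\alpha}$.
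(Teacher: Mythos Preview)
Your plan is essentially the paper's: same characteristic analysis yielding $c^{*}_{\alpha}$, same sub/super solutions $\min\{e^{\lambda_1\xi},1\}$ and $\max\{e^{\lambda_1\xi}-qe^{\nu\lambda_1\xi},0\}$, monotone iteration through the resolvent of $\partial_{\xi}^{2}-\mathcal{D}^{\alpha}-\kappa^{2}$, and the same limiting argument $c_n\downarrow c^{*}_{\alpha}$ with the normalisation $\phi_n(0)=1/2$ for the critical speed. The paper differs only in how it handles the two points you flag as hard. For the resolvent, rather than invoking it abstractly, the paper constructs the Green's function explicitly as a Neumann series $G=\sum_{j\ge0}K_0*K_{\alpha}^{[j]}$ with $K_0=(2\kappa)^{-1}e^{-\kappa|\xi|}$ and $K_{\alpha}=-c^{\alpha}\partial_{\xi}^{\alpha}K_0$, showing $\|K_{\alpha}\|_{L^1}=O(\kappa^{\alpha-2})$ so that the series converges for $\kappa$ large; order preservation is then proved via a maximum principle based on the integration-by-parts identity $\partial_{\xi}^{\alpha}\psi(\xi)=-\tfrac{\alpha}{\Gamma(1-\alpha)}\int_0^{\infty}\tfrac{\psi(\xi-s)-\psi(\xi)}{s^{1+\alpha}}\,ds$, which is nonpositive at any minimum. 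This same identity gives $\mathcal{D}^{\alpha}\lphi\le0$ on $\{\lphi=0\}$ in one line (every such point is a global minimum of $\lphi$), replacing your Chebyshev rearrangement. For the corners, the paper mollifies $\uphi,\lphi$ by $\rho_{\eps}$ and verifies by an $\eps$-expansion that the $-\eps^{-1}\rho(\eta)$ term coming from the second derivative dominates on the $\eps$-neighbourhood of each break; this avoids distributional arguments at the price of a page of calculus, and as a side effect the normalisation $e^{-\lambda_1\xi}\phi\to1$ requires a final translation to remove the factor $R(\eps\lambda_1)=\int\rho(y)e^{-\eps\lambda_1 y}\,dy$, whereas your unmollified barriers give it directly.
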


\medskip
%%%
We have established the monotonicity of asymptotic traveling wave solutions and the exponential decay as $\xi\to-\infty$.
Whether $c^{*}_{\alpha}$ is the minimum speed remains unresolved
and is a problem to be considered in future work.
The proof is given in Section \ref{sec:proof}.

%%%
Owing to the monotonicity, we obtain the correspondence of asymptotic traveling wave solutions to the time evolution equation \eqref{eq:main}.
%%%
\begin{corollary}
For $\alpha\in(0,1)$ and $c\ge c^{*}_{\alpha}$, we set $v(t,x)=\phi(x+ct)$, where $\phi$ is constructed in Theorem \ref{thm:main}.
Then, $v(t,x)$ is a sub-solution of \eqref{eq:main}, that is, $v(t,x)$ satisfies
\beaa
\partial^{\alpha}_t v < v_{xx} + f(v) \quad (t>0,\ x\in\bR).
\eeaa
\end{corollary}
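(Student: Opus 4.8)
The plan is to reduce everything to the defining identity \eqref{eq:tw} by a direct computation, the inequality then being read off from the monotonicity of $\phi$. First I would fix $t>0$, $x\in\bR$, set $\xi=x+ct$, and note that $v_{xx}(t,x)=\phi''(\xi)$ and $f(v(t,x))=f(\phi(\xi))$, while the change of variables already carried out to obtain \eqref{eq:Caputo} (valid because $\phi'\in C(\bR)$ and $s^{-\alpha}\in L^{1}_{\mathrm{loc}}(0,\infty)$) gives
\[
\partial^{\alpha}_{t} v(t,x)=\dfrac{c^{\alpha}}{\Gamma(1-\alpha)}\int^{ct}_{0}\dfrac{\phi'(\xi-s)}{s^{\alpha}}\,ds .
\]
Since $(c,\phi)$ is an asymptotic traveling wave solution, \eqref{eq:tw} holds and the improper integral $\int^{+\infty}_{0}\phi'(\xi-s)s^{-\alpha}\,ds$ converges (this is part of Theorem \ref{thm:main}, where $\phi'$ decays exponentially as its argument tends to $-\infty$). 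Subtracting the two identities, I would obtain the key formula
\[
\bigl(v_{xx}+f(v)\bigr)(t,x)-\partial^{\alpha}_{t} v(t,x)=\dfrac{c^{\alpha}}{\Gamma(1-\alpha)}\int^{+\infty}_{ct}\dfrac{\phi'(\xi-s)}{s^{\alpha}}\,ds ,
\]
valid for every $t>0$ and $x\in\bR$.

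The second step is to show this right-hand side is strictly positive. Nonnegativity is clear: $c^{\alpha}>0$, $\Gamma(1-\alpha)>0$ for $\alpha\in(0,1)$, $s^{-\alpha}>0$ on $(ct,+\infty)$, and $\phi'\ge 0$ because $\phi$ is increasing by Theorem \ref{thm:main}. For strictness it suffices to produce one $s_{0}>ct$ with $\phi'(\xi-s_{0})>0$, since $\phi'$ is continuous and hence positive on a neighbourhood of such an $s_{0}$. As $s$ runs over $(ct,+\infty)$ the argument $\xi-s$ covers $(-\infty,x)$, so I need $\phi'$ to be positive somewhere on $(-\infty,x)$. I would get this from $\phi>0$ on all of $\bR$ — immediate for $c>c^{*}_{\alpha}$ from the normalization $e^{-\lambda_{1}\xi}\phi(\xi)\to1$, and otherwise inherited from the construction in Theorem \ref{thm:main}, where $\phi$ is bounded below near $-\infty$ by a strictly positive function — together with $\phi(-\infty)=0$ from \eqref{boundary}: if $\phi'$ vanished identically on $(-\infty,x)$ then $\phi$ would be constant there, equal to $\phi(-\infty)=0$, contradicting $\phi>0$. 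Hence $\{\phi'>0\}\cap(-\infty,x)$ is a nonempty open set, the integral above is strictly positive, and therefore $\partial^{\alpha}_{t} v<v_{xx}+f(v)$ on $(0,+\infty)\times\bR$.

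I do not expect a real obstacle here: the computation is mechanical. The only two points needing a word of care are the convergence of $\int^{+\infty}_{0}\phi'(\xi-s)s^{-\alpha}\,ds$, which is what makes the subtraction legitimate and which is guaranteed by Theorem \ref{thm:main}, and the upgrade from ``$\le$'' to the strict inequality, which is precisely where the monotonicity and positivity of $\phi$ supplied by Theorem \ref{thm:main} enter. Both are already available, so the corollary is immediate once that theorem is in hand.
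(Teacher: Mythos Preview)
Your proposal is correct and follows essentially the same route as the paper: both subtract the finite-range formula \eqref{eq:Caputo} from the defining equation \eqref{eq:tw} to obtain the tail integral $\dfrac{c^{\alpha}}{\Gamma(1-\alpha)}\int_{ct}^{+\infty}\dfrac{\phi'(\xi-s)}{s^{\alpha}}\,ds$, and then invoke the monotonicity of $\phi$ from Theorem~\ref{thm:main} for the sign. The paper's proof is a two-line version of yours, asserting the strict inequality directly from $\phi$ being increasing; your additional care in justifying strictness via $\phi>0$ and $\phi(-\infty)=0$ is sound but not a different idea.
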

%%%
\begin{proof}
Fix $\alpha\in (0,1)$ and $c\ge c^{*}_{\alpha}$.
Then, from \eqref{eq:Caputo} and the definition of the asymptotic traveling wave solution, we have
\beaa
\partial^{\alpha}_t v - (v_{xx} + f(v)) = -\dfrac{c^{\alpha}}{\Gamma(1-\alpha)}\int^{+\infty}_{ct} \dfrac{\phi'(\xi-s)}{s^{\alpha}}ds <0
\eeaa
for all $t>0$ and $x\in\bR$.
\end{proof}

\medskip
%%%
According to the comparison principle,
solutions to Equation \eqref{eq:main} are above the asymptotic traveling wave solution if the initial states are properly given.
An example of an initial state is an upper solution constructed in Lemma \ref{lem:up}.
See \cite{FLY, KRY, LY} for results on comparison principles in fractional differential equations.

%%%%%%%%%%%%%%%
\begin{remark}
By taking the limits, we obtain
\beaa
\lim_{\alpha\to 1} c^{*}_{\alpha} = 2\sqrt{f'(0)}= c^{*}_1,\quad
\dlim_{\alpha\to+0} c^{*}_{\alpha} =
\begin{cases}
+\infty &f'(0) \ge 1,\\
0 &f'(0)<1.
\end{cases}
\eeaa
$c^{*}_{\alpha}$ connects to the minimum speed when $\alpha=1$.
In addition, the behavior depends on the value of $f'(0)$ as $\alpha$ approaches zero.
\end{remark}

%%%%%%%%%%
\bigskip
\section{Existence of asymptotic traveling wave solutions}\label{sec:proof}
In this section, we give a proof of Theorem \ref{thm:main}.
Equation \eqref{eq:tw} is nonlocal,
and it is not easy to perform an analysis that considers local properties, such as a phase-plane analysis,
to investigate the existence and properties of asymptotic traveling wave solutions.
Therefore, we apply a monotone iteration method based on the maximum principle.

%%%%
Throughout this section, we suppose that $\alpha\in (0,1)$ and $c>0$.
For convenience, we define the fractional derivative
\beaa
\partial^{\alpha}_{\xi}\psi(\xi) := \dfrac{1}{\Gamma(1-\alpha)} \int^{+\infty}_{0} \dfrac{\psi'(\xi-s)}{s^{\alpha}} ds
\eeaa
and the set
\beaa
C^{k}_{b} (\bR) := \left\{ \psi\in C^k(\bR)\ \middle|\ \sup_{\xi\in\bR}|\psi^{(j)}(\xi)|<+\infty\quad (j=0,1,\ldots,k) \right\}.
\eeaa
Also, we denote $C_b(\bR):= C^{0}_b(\bR)$.

%%%
\medskip
\subsection{Preliminaries}
First, we analyze the existence of a solution to the linear problem
\be\label{eq:linear}
L \psi (\xi) + g(\xi)= 0
\ee
for $g\in C_b(\bR)$, where $L$ is a linear operator
\beaa
L\psi(\xi):= \psi''(\xi)- c^{\alpha}\partial^{\alpha}_{\xi} \psi(\xi) -\kappa^2 \psi(\xi)
\eeaa
where $\kappa$ is any positive constant satisfying $\kappa^2> \max_{u\in [0,1]} |f'(u)|$.
We will fix it later.
This problem corresponds to equation \eqref{eq:tw} if $g(\xi)= \kappa^2 \psi(\xi)+f(\psi(\xi))$.

%%%%%%%%%%%%%%%%%%
To solve problem \eqref{eq:linear},
we derive the integral equation and apply the Neumann series argument for the existence of a solution.
Let us define the functions
\beaa
K_{0} (\xi) := \dfrac{1}{2\kappa} e^{-\kappa|\xi|},\quad
K_{\alpha}(\xi):= -c^{\alpha} \partial^{\alpha}_{\xi}K_{0}(\xi) = -\dfrac{c^{\alpha}}{\Gamma(1-\alpha)} \int^{+\infty}_{0} \dfrac{K'_0(\xi-s)}{s^{\alpha}}ds.
\eeaa
We note that for all $g\in C_b(\bR)$, $(K_0*g)$ satisfies
\beaa
(K_0*g)''(\xi) - \kappa^2(K_0*g)(\xi) + g(\xi)=0.
\eeaa
This implies $K_0*g$ belongs to $C^2_b(\bR)$ for any $g\in C_b(\bR)$.

%%%
We first check the properties of $K_{\alpha}$.
%%%
\begin{lemma}\label{lem:ker}
$K_{\alpha}$ has the following properties:
\begin{itemize}
    \item[(i)] $K_{\alpha}\in C(\bR)$;
    \item[(ii)] $K_{\alpha}(\xi)= -(c\kappa)^{\alpha} K_0(\xi)$ holds for $\xi\le 0$;
    \item[(iii)] $\dlim_{\xi\to+\infty} \xi^{1+\alpha} K_{\alpha}(\xi)= \dfrac{\alpha c^{\alpha}}{{\red\kappa^2}\Gamma(1-\alpha)}$;
\end{itemize}
\end{lemma}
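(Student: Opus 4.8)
The plan is to reduce the entire lemma to the analysis of the scalar integral $\pa^{\alpha}_{\xi}K_0(\xi)=\frac{1}{\Gamma(1-\alpha)}\int_0^{+\infty}s^{-\alpha}K_0'(\xi-s)\,ds$, where $K_0'(\xi)=-\tfrac12\,\mathrm{sgn}(\xi)\,e^{-\kappa|\xi|}$ is bounded and continuous off $\xi=0$; this integral converges absolutely because $s^{-\alpha}$ is integrable near $s=0$ (here $\alpha<1$ enters) and $K_0'(\xi-s)$ decays exponentially as $s\to+\infty$. The organizing device throughout is to split the integral at $s=\xi$, which is exactly where $\mathrm{sgn}(\xi-s)$ changes sign.

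I would first dispatch (ii), which is the cleanest: for $\xi\le 0$ one has $\xi-s\le 0$ for every $s\ge 0$, so $K_0'(\xi-s)=\tfrac12 e^{\kappa(\xi-s)}$, and using the Euler integral $\int_0^{\infty}s^{-\alpha}e^{-\kappa s}\,ds=\kappa^{\alpha-1}\Gamma(1-\alpha)$ one obtains
\[
\pa^{\alpha}_{\xi}K_0(\xi)=\frac{e^{\kappa\xi}}{2\Gamma(1-\alpha)}\int_0^{\infty}s^{-\alpha}e^{-\kappa s}\,ds=\frac{\kappa^{\alpha-1}}{2}\,e^{\kappa\xi}=\kappa^{\alpha}K_0(\xi),
\]
whence $K_\alpha(\xi)=-c^\alpha\kappa^{\alpha}K_0(\xi)=-(c\kappa)^{\alpha}K_0(\xi)$ on $(-\infty,0]$.

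For $\xi>0$ the split, together with the changes of variable $u=\xi-s$ on $(0,\xi)$ and $u=s-\xi$ on $(\xi,+\infty)$, rewrites the integral as $\tfrac12\bigl(I_2(\xi)-I_1(\xi)\bigr)$ with $I_1(\xi):=\int_0^{\xi}e^{-\kappa u}(\xi-u)^{-\alpha}\,du$ and $I_2(\xi):=\int_0^{+\infty}e^{-\kappa u}(\xi+u)^{-\alpha}\,du$, so that $K_\alpha=\frac{c^\alpha}{2\Gamma(1-\alpha)}(I_1-I_2)$ there. From this representation (i) follows by a parameter-continuity argument: $I_2$ is continuous on $[0,+\infty)$ by dominated convergence, while the scaling $u=\xi v$ turns $I_1$ into $\xi^{1-\alpha}\int_0^1 e^{-\kappa\xi v}(1-v)^{-\alpha}\,dv$, continuous on $[0,+\infty)$ with value $0$ at $\xi=0$; one then checks that $K_\alpha(0^{+})=-(c\kappa)^\alpha K_0(0)$, which matches the value furnished by (ii), so the two half-line formulas glue into a function continuous on all of $\bR$.

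The substantive point is (iii), and I expect it to be the main obstacle, because the naive expansion $(\xi\mp u)^{-\alpha}=\xi^{-\alpha}(1\mp u/\xi)^{-\alpha}$ shows that the $O(\xi^{-\alpha})$ leading terms of $I_1$ and $I_2$ are \emph{identical}: they cancel, $K_\alpha$ decays strictly faster than $\xi^{-\alpha}$, and the true rate $\xi^{-1-\alpha}$ must be extracted from a difference of two quantities of the same order. I would handle this by writing $I_1(\xi)-I_2(\xi)=\int_0^{+\infty}e^{-\kappa u}\bigl[(\xi-u)^{-\alpha}\mathbf{1}_{\{u<\xi\}}-(\xi+u)^{-\alpha}\bigr]\,du$ and splitting at $u=\xi/2$: on $\{u<\xi/2\}$ use the expansion $(1-t)^{-\alpha}-(1+t)^{-\alpha}=2\alpha t+O(t^{3})$, uniform for $t\in[0,\tfrac12]$ (the even-order terms cancel), which after multiplication by $\xi^{1+\alpha}$ contributes $2\alpha\int_0^{+\infty}u\,e^{-\kappa u}\,du$ in the limit plus an $O(\xi^{-2})$ error; on $\{u\ge\xi/2\}$ the factor $e^{-\kappa u}\le e^{-\kappa\xi/2}$ kills the contribution even against the polynomial weight $\xi^{1+\alpha}$. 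The delicate step is precisely that the binomial expansion of $(\xi-u)^{-\alpha}$ degenerates as $u\uparrow\xi$, so it cannot be applied globally — isolating that region by a cut at a fixed fraction of $\xi$ and dominating it by exponential smallness is the crux. Once this is in place, $\xi^{1+\alpha}K_\alpha(\xi)\to\frac{c^\alpha}{2\Gamma(1-\alpha)}\cdot 2\alpha\int_0^{+\infty}u\,e^{-\kappa u}\,du$, which yields the limit asserted in (iii).
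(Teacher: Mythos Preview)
Your argument is correct, and for (i) and (ii) it is essentially the paper's proof in different notation. For (iii) you take a genuinely different route.

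The paper keeps the representation
\[
K_\alpha(\xi)=-\frac{(c\kappa)^\alpha}{2\kappa\,\Gamma(1-\alpha)}\Bigl\{e^{\kappa\xi}\!\int_{\kappa\xi}^{\infty}\!\frac{e^{-s}}{s^{\alpha}}\,ds - e^{-\kappa\xi}\!\int_{0}^{\kappa\xi}\!\frac{e^{s}}{s^{\alpha}}\,ds\Bigr\}
\]
and integrates each inner integral once by parts; this raises the power in the denominator from $\alpha$ to $1+\alpha$ and spits out two boundary terms $(\kappa\xi)^{-\alpha}$ that cancel exactly, after which l'H\^opital is applied to the remaining integrals. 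Your approach instead passes to the symmetric Laplace-type integrals $I_1,I_2$ and expands $(\xi-u)^{-\alpha}-(\xi+u)^{-\alpha}$ in powers of $u/\xi$, isolating the region $u\ge\xi/2$ where $e^{-\kappa u}$ provides exponential smallness. The paper's cancellation of the $(\kappa\xi)^{-\alpha}$ boundary terms is exactly your observation that the zeroth (and all even) Taylor coefficients vanish; its l'H\^opital step is replaced by your dominated-convergence computation of $\int_0^{\xi/2}u\,e^{-\kappa u}\,du\to\kappa^{-2}$. Your expansion makes it more transparent \emph{why} the decay is precisely $\xi^{-1-\alpha}$ (first surviving odd term), while the paper's integration-by-parts avoids having to control a Taylor remainder uniformly on $[0,\tfrac12]$.

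One caveat: your final line asserts that $\dfrac{c^\alpha}{2\Gamma(1-\alpha)}\cdot 2\alpha\int_0^{\infty}u\,e^{-\kappa u}\,du$ ``yields the limit asserted in (iii)''. Evaluating the integral gives $\dfrac{\alpha c^\alpha}{\kappa^{2}\Gamma(1-\alpha)}$, which differs from the stated constant by a factor of $\kappa$. This is not a flaw in your method---carrying the paper's own computation through carefully gives the same $\kappa^{-2}$---and the precise constant is never used elsewhere; but you should not claim agreement without doing the last arithmetic step.
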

%%%
\begin{proof}
(i) $K_{\alpha}(\xi)$ is represented by
\beaa
K_{\alpha}(\xi) = -\dfrac{c^{\alpha}}{{\red 2\Gamma(1-\alpha)}}\left\{ e^{\kappa\xi} \int^{+\infty}_{\max\{\xi,0\}} \dfrac{e^{-\kappa s}}{s^{\alpha}}ds -e^{-\kappa \xi} \int^{\max\{\xi,0\}}_{0} \dfrac{e^{\kappa s}}{s^{\alpha}} ds\right\}.
\eeaa
It is easy to see that both integrals are well-defined for all $\xi\in\bR$ and continuous with respect to $\xi$.

%%%
(ii) For $\xi\le 0$, we have
\beaa
K_{\alpha}(\xi)= -\dfrac{c^{\alpha} e^{\kappa\xi}}{{\red 2 \Gamma(1-\alpha)}} \int^{+\infty}_{0} \dfrac{e^{-\kappa s}}{s^{\alpha}}ds = -\dfrac{c^{\alpha} \kappa^{\alpha-1}}{2} e^{\kappa\xi} = -(c\kappa)^{\alpha} K_0(\xi).
\eeaa

%%%
(iii) When $\kappa\xi>1$, we obtain
\beaa
K_{\alpha}(\xi) &=& -\dfrac{c^{\alpha}}{2 \Gamma(1-\alpha)}\left\{ e^{\kappa\xi} \int^{+\infty}_{\xi} \dfrac{e^{-\kappa s}}{s^{\alpha}}ds -e^{-\kappa \xi} \int^{\xi}_{0} \dfrac{e^{\kappa s}}{s^{\alpha}} ds\right\} \\
&=& -\dfrac{(c\kappa)^{\alpha}}{2\kappa \Gamma(1-\alpha)}\left\{ e^{\kappa\xi} \int^{+\infty}_{\kappa\xi} \dfrac{e^{- s}}{s^{\alpha}}ds -e^{-\kappa \xi} \int^{\kappa\xi}_{0} \dfrac{e^{s}}{s^{\alpha}} ds\right\}.
\eeaa
We note that
\beaa
\int^{+\infty}_{\kappa\xi} \dfrac{e^{- s}}{s^{\alpha}}ds &=& \left[ -\dfrac{e^{-s}}{s^{\alpha}} \right]^{s=+\infty}_{s=\kappa \xi} - \alpha \int^{+\infty}_{\kappa\xi} \dfrac{e^{-s}}{s^{1+\alpha}} ds \\
&=& \dfrac{e^{-\kappa\xi}}{(\kappa\xi)^{\alpha}} - \alpha \int^{+\infty}_{\kappa\xi} \dfrac{e^{-s}}{s^{1+\alpha}} ds
\eeaa
and for $\delta\in (0,1)$,
\beaa
\int^{\kappa\xi}_{0} \dfrac{e^{s}}{s^{\alpha}} ds &=&  \int^{\delta}_{0} \dfrac{e^{s}}{s^{\alpha}} ds + \int^{\kappa\xi}_{\delta} \dfrac{e^{s}}{s^{\alpha}}ds \\
&=& \int^{\delta}_{0} \dfrac{e^{s}}{s^{\alpha}} ds +
\dfrac{e^{\kappa\xi}}{(\kappa\xi)^{\alpha}} - \dfrac{e^{\delta}}{\delta^{\alpha}}  + \alpha \int^{\kappa\xi}_{\delta} \dfrac{e^{s}}{s^{1+\alpha}}ds.
\eeaa
Then, we have
\beaa
&&e^{\kappa\xi} \int^{+\infty}_{\kappa\xi} \dfrac{e^{- s}}{s^{\alpha}}ds -e^{-\kappa \xi} \int^{\kappa\xi}_{0} \dfrac{e^{s}}{s^{\alpha}} ds \\
&&= -\alpha e^{\kappa\xi} \int^{+\infty}_{\kappa\xi} \dfrac{e^{- s}}{s^{1+\alpha}}ds -\alpha e^{-\kappa \xi} \int^{\kappa\xi}_{\delta} \dfrac{e^{s}}{s^{1+\alpha}} ds + \left( \int^{\delta}_{0}\dfrac{e^{s}}{s^{\alpha}}ds - \dfrac{e^{\delta}}{\delta^{\alpha}} \right) e^{-\kappa\xi}.
\eeaa
Hence, we obtain
\beaa
\dlim_{\xi\to+\infty} \xi^{1+\alpha} K_{\alpha}(\xi)= \dfrac{\alpha (c\kappa)^{\alpha}}{{\red \kappa}\Gamma(1-\alpha)} \kappa^{-(1+\alpha)} = \dfrac{\alpha c^{\alpha}}{{\red \kappa^2}\Gamma(1-\alpha)}.
\eeaa
from l'H\^{o}pital's rule.
\end{proof}

\medskip
%%%
Next, let us derive the integral equation.
For the derivation, we use the fact that $K_0$ is a Green's function of $(\kappa^2 -\partial^2_{x})$.
Then, we obtain the following integral equation.

%%%
\begin{lemma}\label{lem:integ}
Let $\psi\in C^2_b(\bR)$ be a solution of the integral equation
\be\label{eq:integ}
\psi(\xi) = (K_{\alpha}*\psi)(\xi) + K_0*g(\xi).
\ee
Then, $\psi$ is a solution to \eqref{eq:linear}.
\end{lemma}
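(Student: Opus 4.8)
The plan is to apply the constant-coefficient operator $\partial^{2}_{\xi}-\kappa^{2}$ to \eqref{eq:integ} and recover \eqref{eq:linear}. Two structural facts carry the argument: that $K_{0}$ is the Green's function of $\kappa^{2}-\partial^{2}_{\xi}$, so $(\partial^{2}_{\xi}-\kappa^{2})(K_{0}*h)=-h$ for every $h\in C_{b}(\bR)$ (recorded just above), and that $K_{\alpha}=-c^{\alpha}\partial^{\alpha}_{\xi}K_{0}$, so that convolution against $K_{\alpha}$ equals $-c^{\alpha}\partial^{\alpha}_{\xi}$ composed with convolution against $K_{0}$.

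First I would use \eqref{eq:integ} itself to regularize: since $\psi\in C^{2}_{b}(\bR)$ and $K_{0}*g\in C^{2}_{b}(\bR)$, we get $K_{\alpha}*\psi=\psi-K_{0}*g\in C^{2}_{b}(\bR)$. Moreover $K_{\alpha}\in L^{1}(\bR)$ by Lemma \ref{lem:ker}(ii)--(iii) (exponential decay at $-\infty$, decay of order $\xi^{-1-\alpha}$ at $+\infty$), so one may differentiate $K_{\alpha}*\psi$ under the integral sign, transferring derivatives onto the smooth bounded factor: $(K_{\alpha}*\psi)^{(j)}=K_{\alpha}*\psi^{(j)}$ for $j=1,2$, each step justified by dominated convergence with majorant $|K_{\alpha}(\,\cdot\,)|\,\|\psi^{(j)}\|_{\infty}\in L^{1}(\bR)$. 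Applying $\partial^{2}_{\xi}-\kappa^{2}$ to \eqref{eq:integ}, transferring derivatives as above, and using $(K_{0}*g)''-\kappa^{2}(K_{0}*g)=-g$ then gives $\psi''-\kappa^{2}\psi=K_{\alpha}*v-g$ with $v:=\psi''-\kappa^{2}\psi\in C_{b}(\bR)$, hence
\[
L\psi+g=\big(\psi''-\kappa^{2}\psi\big)-c^{\alpha}\partial^{\alpha}_{\xi}\psi+g=K_{\alpha}*v-c^{\alpha}\partial^{\alpha}_{\xi}\psi .
\]
So the whole statement reduces to the identity $K_{\alpha}*v=c^{\alpha}\partial^{\alpha}_{\xi}\psi$.

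To prove that identity I would first note $\psi=-K_{0}*v$: the function $w:=\psi+K_{0}*v$ lies in $C^{2}_{b}(\bR)$ and satisfies $w''-\kappa^{2}w=(\psi''-\kappa^{2}\psi)+\big((K_{0}*v)''-\kappa^{2}(K_{0}*v)\big)=v-v=0$, and the only bounded solution of $w''=\kappa^{2}w$ on $\bR$ with $\kappa>0$ is $w\equiv0$. Combining this with $K_{\alpha}=-c^{\alpha}\partial^{\alpha}_{\xi}K_{0}$, the desired identity $K_{\alpha}*v=c^{\alpha}\partial^{\alpha}_{\xi}\psi=-c^{\alpha}\partial^{\alpha}_{\xi}(K_{0}*v)$ becomes the commutation relation
\[
(\partial^{\alpha}_{\xi}K_{0})*v=\partial^{\alpha}_{\xi}(K_{0}*v).
\]

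This commutation is the heart of the proof and the step I expect to be delicate. Written out, it is an interchange of the order of integration in $\int_{\bR}\big(\frac{1}{\Gamma(1-\alpha)}\int_{0}^{+\infty}s^{-\alpha}K_{0}'(\xi-y-s)\,ds\big)v(y)\,dy$, whose other iterated form is $\frac{1}{\Gamma(1-\alpha)}\int_{0}^{+\infty}s^{-\alpha}(K_{0}*v)'(\xi-s)\,ds=\partial^{\alpha}_{\xi}(K_{0}*v)(\xi)$. A crude Tonelli estimate is unavailable, because $v$ need not decay while $s^{-\alpha}$ is not integrable near $s=+\infty$; the saving fact, from Lemma \ref{lem:ker}, is that $\xi\mapsto\partial^{\alpha}_{\xi}K_{0}(\xi)=-c^{-\alpha}K_{\alpha}(\xi)$ is itself in $L^{1}(\bR)$ (the slow tail cancels), so the left-hand convolution is absolutely convergent. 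I would therefore truncate the outer integral to $\{|y|\le R\}$, where Fubini is legitimate because $s\mapsto\int_{|y|\le R}s^{-\alpha}|K_{0}'(\xi-y-s)|\,dy$ is integrable on $(0,+\infty)$ by the exponential decay of $K_{0}'$, and then let $R\to+\infty$: the left side converges by dominated convergence with the $L^{1}$ majorant $\|v\|_{\infty}c^{-\alpha}|K_{\alpha}(\xi-\cdot)|$, and the right side converges to the (a priori improper) integral $\partial^{\alpha}_{\xi}(K_{0}*v)(\xi)=-\partial^{\alpha}_{\xi}\psi(\xi)$, which incidentally shows $\partial^{\alpha}_{\xi}\psi$ is well defined. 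This yields $K_{\alpha}*v=-c^{\alpha}\partial^{\alpha}_{\xi}(K_{0}*v)=c^{\alpha}\partial^{\alpha}_{\xi}\psi$, and with the reduction above, $L\psi+g=0$, i.e. \eqref{eq:linear}.
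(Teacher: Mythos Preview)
Your argument is correct and arrives at the same conclusion, but it takes a noticeably longer path than the paper. The paper proceeds in two moves: first it proves the single identity
\[
c^{\alpha}\,(K_{0}*\partial^{\alpha}_{\xi}\psi)=-(K_{\alpha}*\psi)
\]
directly, by writing out the double integral, substituting $z=y-s$, and integrating once by parts in $z$ (so the derivative lands on $K_{0}$). Then it applies $\partial^{2}_{\xi}-\kappa^{2}$ to \eqref{eq:integ} and reads off $L\psi+g=0$, using the Green's function property of $K_{0}$ on the term $K_{0}*\partial^{\alpha}_{\xi}\psi$. By contrast, you first apply $\partial^{2}_{\xi}-\kappa^{2}$, introduce the auxiliary function $v=\psi''-\kappa^{2}\psi$, invoke a Liouville-type uniqueness argument to recover $\psi=-K_{0}*v$, and only then prove the commutation $(\partial^{\alpha}_{\xi}K_{0})*v=\partial^{\alpha}_{\xi}(K_{0}*v)$. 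That commutation is of course equivalent to the paper's identity (just with $v$ in place of $\psi$), so your detour through $v$ and the uniqueness step buys nothing and could be removed: the paper's direct version, applied to $\psi$ itself, gives the result in one line fewer. What your write-up does buy is more explicit care about the interchange of integrals --- truncating to $|y|\le R$ and invoking the $L^{1}$ bound on $K_{\alpha}$ --- whereas the paper simply performs the change of variables and integration by parts without comment. One small point: in your last step you assert that ``the right side converges to $\partial^{\alpha}_{\xi}(K_{0}*v)$'', but the truncation is in $y$, not in $s$, so this identification of the limit with the defining integral $\frac{1}{\Gamma(1-\alpha)}\int_{0}^{\infty}s^{-\alpha}(K_{0}*v)'(\xi-s)\,ds$ is not immediate from dominated convergence; it would be cleanest either to pass to the integration-by-parts form $\partial^{\alpha}_{\xi}\varphi(\xi)=-\frac{\alpha}{\Gamma(1-\alpha)}\int_{0}^{\infty}s^{-1-\alpha}\big(\varphi(\xi-s)-\varphi(\xi)\big)\,ds$ (absolutely convergent for $\varphi\in C^{1}_{b}$) before interchanging, or to adopt the paper's version of the identity, where the same issue is present but no less tractable.
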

%%%
\begin{proof}
We first show that
\beaa
c^{\alpha}(K_0 * \partial^{\alpha}_{\xi}\psi) = -(K_{\alpha}*\psi).
\eeaa
Changing variables $z=y-s$ and integrating by parts, we obtain
\beaa
c^{\alpha}(K_0 * \partial^{\alpha}_{\xi}\psi)(\xi)
&=& \dfrac{c^{\alpha}}{\Gamma(1-\alpha)} \int_{\bR} \int^{+\infty}_{0} K_0(\xi-y) \dfrac{\psi'(y-s)}{s^{\alpha}}dsdy \\
&=& \dfrac{c^{\alpha}}{\Gamma(1-\alpha)}\int^{+\infty}_{0} \int_{\bR} \dfrac{K_0(\xi-z-s)}{s^{\alpha}} \psi'(z) dz ds\\
&=& \dfrac{c^{\alpha}}{\Gamma(1-\alpha)}  \int^{+\infty}_{0} \int_{\bR} \dfrac{K'_0(\xi-z-s)}{s^{\alpha}} \psi(z)dz ds\\
&=& -(K_{\alpha}*\psi)(\xi)
\eeaa
for all $\xi\in\bR$.

%%%
Next, let us prove that $\psi$ is a solution to \eqref{eq:tw}.
From the properties of $K_0(\xi)$, we deduce
\beaa
\psi''(\xi) - \kappa^2\psi + g(\xi) &=& (K_{\alpha}*\psi)''(\xi) - \kappa^2 (K_{\alpha}*\psi)(\xi).
\eeaa
Then, we have
\beaa
&& (K_{\alpha}*\psi)''(\xi) - \kappa^2 (K_{\alpha}*\psi)(\xi) \\
&&\quad = -c^{\alpha}\left\{ (K_0 * \partial^{\alpha}_{\xi}\psi)''(\xi) - \kappa^2 (K_0 * \partial^{\alpha}_{\xi}\psi)(\xi) \right\} \\
&&\quad =c^{\alpha} \partial^{\alpha}_{\xi} \psi(\xi).
\eeaa
Thus, $\psi$ satisfies $L\psi(\xi)+g(\xi)=0$.
\end{proof}

\medskip
%%%
Let us show the existence of the solution to \eqref{eq:integ} in $L^{\infty}(\bR)$.
To apply the Neumann series argument,
we check the property of $\|K_{\alpha}\|_{L^1}$.
%%%

%%%
\begin{lemma}
There is $\Theta>0$ which is independent of $\kappa$ such that $\|K_{\alpha}\|_{L^1}< \Theta \kappa^{\alpha-2}$ holds for all $\kappa>1$.
\end{lemma}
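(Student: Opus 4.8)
The plan is to split $\|K_\alpha\|_{L^1}$ at the origin and estimate the two half-lines separately, since Lemma~\ref{lem:ker} gives $K_\alpha$ an explicit form on each. On $(-\infty,0]$, Lemma~\ref{lem:ker}(ii) yields $K_\alpha(\xi)=-(c\kappa)^\alpha K_0(\xi)=-\frac{(c\kappa)^\alpha}{2\kappa}\,e^{\kappa\xi}$, so
\[
\int_{-\infty}^{0}|K_\alpha(\xi)|\,d\xi=\frac{(c\kappa)^\alpha}{2\kappa}\int_{-\infty}^{0}e^{\kappa\xi}\,d\xi=\frac{c^\alpha}{2}\,\kappa^{\alpha-2},
\]
which already has the asserted power of $\kappa$ with a constant depending only on $c,\alpha$.

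For $(0,+\infty)$ I would start from the closed form of $K_\alpha$ used in the proof of Lemma~\ref{lem:ker}; performing the substitution $\sigma=\kappa s$ in each of the two integrals there gives, for $\xi>0$,
\[
K_\alpha(\xi)=-\frac{c^\alpha\kappa^{\alpha-1}}{2\Gamma(1-\alpha)}\,B(\kappa\xi),\qquad B(\eta):=e^{\eta}\int_{\eta}^{+\infty}\frac{e^{-\sigma}}{\sigma^{\alpha}}\,d\sigma-e^{-\eta}\int_{0}^{\eta}\frac{e^{\sigma}}{\sigma^{\alpha}}\,d\sigma.
\]
The purpose of this rescaled form is that $\eta=\kappa\xi$ strips all the $\kappa$-dependence out of the integral:
\[
\int_{0}^{+\infty}|K_\alpha(\xi)|\,d\xi=\frac{c^\alpha\,\kappa^{\alpha-2}}{2\Gamma(1-\alpha)}\,I(\alpha),\qquad I(\alpha):=\int_{0}^{+\infty}|B(\eta)|\,d\eta,
\]
so the whole lemma reduces to showing that $I(\alpha)<+\infty$; note $I(\alpha)$ depends only on $\alpha$.

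Finiteness of $I(\alpha)$ is the heart of the matter. The region $\eta\to0^{+}$ is harmless: $B$ is continuous on $(0,+\infty)$, and $B(\eta)\to\int_{0}^{+\infty}\sigma^{-\alpha}e^{-\sigma}\,d\sigma=\Gamma(1-\alpha)$ as $\eta\to0^{+}$ (the second term being $O(\eta^{1-\alpha})$), so $B$ is bounded on $(0,1]$. The delicate region is $\eta\to+\infty$: each of the two terms of $B$ decays only like $\eta^{-\alpha}$, which is \emph{not} integrable, so the bound must come from cancellation between them. That cancellation is exactly the integration-by-parts identity already obtained in the proof of Lemma~\ref{lem:ker}(iii): fixing $\delta\in(0,1)$, for $\eta>\delta$,
\[
B(\eta)=-\alpha e^{\eta}\int_{\eta}^{+\infty}\frac{e^{-\sigma}}{\sigma^{1+\alpha}}\,d\sigma-\alpha e^{-\eta}\int_{\delta}^{\eta}\frac{e^{\sigma}}{\sigma^{1+\alpha}}\,d\sigma+\left(\int_{0}^{\delta}\frac{e^{\sigma}}{\sigma^{\alpha}}\,d\sigma-\frac{e^{\delta}}{\delta^{\alpha}}\right)e^{-\eta}.
\]
Bounding the first term by $\alpha\,\eta^{-1-\alpha}$ (pull $\sigma^{-1-\alpha}\le\eta^{-1-\alpha}$ outside and integrate $e^{-\sigma}$), splitting the second integral at $\eta/2$ to bound it by $C(\alpha)\bigl(e^{-\eta/2}+\eta^{-1-\alpha}\bigr)$, and using that the last term is $O(e^{-\eta})$, one gets $|B(\eta)|\le C(\alpha)\,\eta^{-1-\alpha}$ for large $\eta$, hence integrable since $1+\alpha>1$. (Alternatively, Lemma~\ref{lem:ker}(iii) together with continuity already gives $K_\alpha(\xi)=O(\xi^{-1-\alpha})$ as $\xi\to+\infty$; the rescaled form is preferable only because the $\kappa$-dependence stays explicit.) Adding the two half-lines gives the exact identity $\|K_\alpha\|_{L^1}=c^\alpha\kappa^{\alpha-2}\bigl(\tfrac12+\tfrac{I(\alpha)}{2\Gamma(1-\alpha)}\bigr)$, so the claim holds with any $\Theta>c^\alpha\bigl(\tfrac12+\tfrac{I(\alpha)}{2\Gamma(1-\alpha)}\bigr)$, which is independent of $\kappa$ (and in fact the hypothesis $\kappa>1$ is not used). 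The one real obstacle is the $\eta\to+\infty$ estimate on $B$, and it is handled by the cancellation already exhibited in Lemma~\ref{lem:ker}.
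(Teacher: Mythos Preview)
Your argument is correct and follows essentially the same route as the paper's proof: both handle $(-\infty,0]$ via Lemma~\ref{lem:ker}(ii), rescale by $\sigma=\kappa s$ on the positive half-line, and use the integration-by-parts identity from Lemma~\ref{lem:ker}(iii) to extract the $\eta^{-1-\alpha}$ decay. The only cosmetic difference is packaging: the paper splits $(0,+\infty)$ further at $1/\kappa$ and computes the resulting double integrals explicitly, whereas you absorb the whole positive half-line into the single scale-invariant quantity $I(\alpha)$ and bound that; your version makes the exact identity $\|K_\alpha\|_{L^1}=c^\alpha\kappa^{\alpha-2}\bigl(\tfrac12+\tfrac{I(\alpha)}{2\Gamma(1-\alpha)}\bigr)$ visible and correctly notes that the hypothesis $\kappa>1$ is never used.
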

%%%
\begin{proof}
To obtain the desired assertion, we evaluate $\|K_{\alpha}\|_{L^1}$ as
\beaa
\int_{\bR}|K_{\alpha}(\xi)|d\xi &=&
\left[ \int^{0}_{-\infty} + \int^{1/\kappa}_0 + \int^{+\infty}_{1/\kappa} \right]|K_{\alpha}(\xi)|d\xi.
\eeaa
Since we have (ii) in Lemma \ref{lem:ker},
the first integral part is computed by
\be\label{ine:neg}
\int^{0}_{-\infty} |K_{\alpha}(\xi)| d\xi = \dfrac{(c\kappa)^{\alpha}}{2\kappa^2}.
\ee

%%%
Next, when $\kappa\xi\in(0,1)$, we obtain
\beaa
|K_{\alpha}(\xi)| &\le& \dfrac{c^{\alpha}}{2 \Gamma(1-\alpha)} \left\{ e^{\kappa\xi} \int^{+\infty}_{0} \dfrac{e^{-\kappa s}}{s^{\alpha}}ds + e^{-\kappa \xi} \int^{1/\kappa}_{0} \dfrac{e^{\kappa s}}{s^{\alpha}} ds\right\} \\
&\le&  \dfrac{(c\kappa)^{\alpha}}{2\kappa \Gamma(1-\alpha)} \left\{ e\Gamma(1-\alpha) + \int^{1}_{0} \dfrac{e^{s}}{s^{\alpha}} ds\right\}.
\eeaa
Thus, the second integral part is evaluated as
\be\label{ine:mid}
\int^{1/\kappa}_{0} |K_{\alpha}(\xi)|d\xi \le  \dfrac{(c\kappa)^{\alpha}}{2\kappa^2 \Gamma(1-\alpha)} \left\{ e\Gamma(1-\alpha) + \int^{1}_{0} \dfrac{e^{s}}{s^{\alpha}} ds\right\}.
\ee

%%%
In the case that $\kappa\xi>1$, we deduce
\beaa
K_{\alpha}(\xi) &=& -\dfrac{c^{\alpha}}{2 \Gamma(1-\alpha)}\left\{ e^{\kappa\xi} \int^{+\infty}_{\xi} \dfrac{e^{-\kappa s}}{s^{\alpha}}ds -e^{-\kappa \xi} \int^{\xi}_{0} \dfrac{e^{\kappa s}}{s^{\alpha}} ds\right\} \\
&=& -\dfrac{(c\kappa)^{\alpha}}{2\kappa \Gamma(1-\alpha)}\left\{ e^{\kappa\xi} \int^{+\infty}_{\kappa\xi} \dfrac{e^{- s}}{s^{\alpha}}ds -e^{-\kappa \xi} \int^{\kappa\xi}_{0} \dfrac{e^{s}}{s^{\alpha}} ds\right\}.
\eeaa
We know that
\beaa
\int^{+\infty}_{\kappa\xi} \dfrac{e^{- s}}{s^{\alpha}}ds &=& \left[ -\dfrac{e^{-s}}{s^{\alpha}} \right]^{s=+\infty}_{s=-\kappa \xi} - \alpha \int^{+\infty}_{\kappa\xi} \dfrac{e^{-s}}{s^{1+\alpha}} ds \\
&=& \dfrac{e^{-\kappa\xi}}{(\kappa\xi)^{\alpha}} - \alpha \int^{+\infty}_{\kappa\xi} \dfrac{e^{-s}}{s^{1+\alpha}} ds
\eeaa
and
\beaa
\int^{\kappa\xi}_{0} \dfrac{e^{s}}{s^{\alpha}} ds &=&  \int^{1}_{0} \dfrac{e^{s}}{s^{\alpha}} ds + \int^{\kappa\xi}_{1} \dfrac{e^{s}}{s^{\alpha}}ds \\
&=& \int^{1}_{0} \dfrac{e^{s}}{s^{\alpha}} ds +
\dfrac{e^{\kappa\xi}}{(\kappa\xi)^{\alpha}} - e  + \alpha \int^{\kappa\xi}_{1} \dfrac{e^{s}}{s^{1+\alpha}}ds.
\eeaa
Hence, we have
\beaa
K_{\alpha}(\xi) &=& \dfrac{(c\kappa)^{\alpha}}{2\kappa \Gamma(1-\alpha)}\left\{\alpha e^{\kappa\xi} \int^{+\infty}_{\kappa\xi} \dfrac{e^{- s}}{s^{1+\alpha}}ds +\alpha e^{-\kappa \xi} \int^{\kappa\xi}_{1} \dfrac{e^{s}}{s^{1+\alpha}} ds \right.\\
&& \hspace{6cm}\left. - \left( \int^{1}_{0}\dfrac{e^{s}}{s^{\alpha}}ds - e \right) e^{-\kappa\xi} \right\}.
\eeaa
Since we know that
\beaa
 \int^{+\infty}_{1/\kappa} e^{\kappa\xi} \int^{+\infty}_{\kappa\xi} \dfrac{e^{- s}}{s^{1+\alpha}}ds d\xi
 &=& -\dfrac{e}{\kappa}  \int^{+\infty}_{1} \dfrac{e^{- s}}{s^{1+\alpha}}ds + \int^{+\infty}_{1/\kappa} \dfrac{1}{(\kappa\xi)^{1+\alpha}}d\xi \\
 &=& -\dfrac{e}{\kappa}  \int^{+\infty}_{1} \dfrac{e^{- s}}{s^{1+\alpha}}ds + \dfrac{1}{\alpha \kappa} \\
 &\le & {\red \dfrac{1}{\alpha \kappa}} 
\eeaa
and
\beaa
    \int^{+\infty}_{1/\kappa} e^{-\kappa\xi} \int^{\kappa\xi}_{1} \dfrac{e^{s}}{s^{1+\alpha}}ds d\xi
    = \int^{+\infty}_{1/\kappa} \dfrac{d\xi}{(\kappa\xi)^{1+\alpha}} = \dfrac{1}{\alpha\kappa}
\eeaa
by using integration by parts,
we evaluate
\bea\label{ine:pos}
&& \int^{+\infty}_{1/\kappa} |K_{\alpha}(\xi)| d\xi \notag \\
&& \le \dfrac{(c\kappa)^{\alpha}}{2\kappa \Gamma(1-\alpha)} \left\{ \alpha \int^{+\infty}_{1/\kappa} e^{\kappa\xi} \int^{+\infty}_{\kappa\xi} \dfrac{e^{- s}}{s^{1+\alpha}}ds d\xi + \alpha \int^{+\infty}_{1/\kappa} e^{-\kappa\xi} \int^{\kappa\xi}_{1} \dfrac{e^{s}}{s^{1+\alpha}}dsd\xi
\right. \notag \\
&& \quad\quad \left. + \dfrac{e^{-1}}{\kappa} \left| \int^{1}_{0}\dfrac{e^{s}}{s^{\alpha}}ds - e \right| \right\} \notag \\
&&\le \dfrac{(c\kappa)^{\alpha}}{2\kappa^2 \Gamma(1-\alpha)} {\red \left\{ 2 + e^{-1} \left| \int^{1}_{0}\dfrac{e^{s}}{s^{\alpha}}ds - e \right|  \right\}}.
\eea

%%%
From inequalities \eqref{ine:neg}, \eqref{ine:mid}, and \eqref{ine:pos},
we can choose a sufficiently large $\Theta>0$ satisfying
\beaa
\|K_{\alpha}\|_{L^1} \le \Theta \kappa^{\alpha-2}.
\eeaa
\end{proof}

\medskip
%%%
Fix $\kappa>0$ sufficiently large satisfying $\|K_{\alpha}\|_{L^1}<1$.
Define {\red $\mathcal{K}_{\alpha}:L^{\infty}(\bR)\to L^{\infty}(\bR)$} as
\beaa
\mathcal{K}_{\alpha} \psi := (K_{\alpha}*\psi).
\eeaa
Also, we put $I$ as the identity operator in $L^{\infty}(\bR)$.
Then, $\mathcal{K}_{\alpha}$ is a bounded operator on $L^{\infty}(\bR^n)$, and the operator norm of $\mathcal{K}_{\alpha}$ is equal to $\|K_{\alpha}\|_{L^{1}}$.
Therefore, $(I-\mathcal{K}_{\alpha})$ is invertible on $L^{\infty}(\bR)$, and the inverse is represented by
\beaa
(I-\mathcal{K}_{\alpha})^{-1} = \dsum^{\infty}_{j=0} \mathcal{K}^{j}_{\alpha}
\eeaa
from the Neumann series theory.

Here, we apply the argument for \eqref{eq:integ} to obtain
\beaa
\psi = \left(\dsum^{\infty}_{j=0} (K_0* K^{[j]}_{\alpha} )\right)* g,
\eeaa
where $K^{[0]}_{\alpha}(\xi):=\delta(\xi)$ and $K^{[j]}_{\alpha}(\xi):=(K_{\alpha}*K^{[j-1]}_{\alpha})(\xi)$ for $j\ge 1$.

For simplicity, we denote
\be\label{green}
G(\xi) := \dsum^{\infty}_{j=0}(K_0* K^{[j]}_{\alpha})(\xi) = \left[ K_0*\left(\dsum^{\infty}_{j=0}K^{[j]}_{\alpha} \right) \right](\xi).
\ee
Then, $G(\xi)$ has the following property.

%%%
\begin{lemma}
    $G(\xi)$ belongs to $C(\bR)\cap L^1(\bR)$ and satisfies $\dint_{\bR} G(\xi)d\xi = \dfrac{1}{\kappa^2}$.
\end{lemma}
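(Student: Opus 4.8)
The plan is to regard $G$ as the convolution of $K_0$ with the Neumann kernel $\sum_{j\ge 0}K_\alpha^{[j]}$, to read off $G\in C(\bR)\cap L^1(\bR)$ from Young's inequality together with absolute summability, and then to reduce the computation of $\int_{\bR}G$ to the single identity $\int_{\bR}K_\alpha\,d\xi=0$.

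For the regularity and integrability, recall that $\kappa$ was fixed so that $q:=\|K_\alpha\|_{L^1}<1$. Young's inequality then gives $\|K_\alpha^{[j]}\|_{L^1}\le q^{j}$ for $j\ge1$, hence $\|K_0*K_\alpha^{[j]}\|_{L^1}\le q^{j}/\kappa^2$ and $\|K_0*K_\alpha^{[j]}\|_{L^\infty}\le q^{j}/(2\kappa)$; moreover each $K_0*K_\alpha^{[j]}$ is continuous (it equals $K_0$ when $j=0$, and for $j\ge1$ it is the convolution of the bounded, uniformly continuous $K_0$ with an $L^1$ function). Consequently the series \eqref{green} defining $G$ converges absolutely in $L^1(\bR)$ and uniformly on $\bR$, so $G\in C(\bR)\cap L^1(\bR)$ with $\|G\|_{L^1}\le\big(\kappa^2(1-q)\big)^{-1}$.

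For the total mass, I would first prove $\int_{\bR}K_\alpha\,d\xi=0$. Since $K_\alpha=-c^\alpha\partial^\alpha_\xi K_0\in L^1(\bR)$, we have $\int_{\bR}K_\alpha=\lim_{R\to\infty}\int_{-R}^{R}K_\alpha$, and on $[-R,R]\times(0,\infty)$ the integrand $s^{-\alpha}|K_0'(\xi-s)|$ is integrable (using $\alpha<1$ near $s=0$ and the exponential decay of $K_0'$ for large $s$), so Fubini yields
\[
\int_{-R}^{R}K_\alpha(\xi)\,d\xi=-\frac{c^\alpha}{\Gamma(1-\alpha)}\int_0^{\infty}\frac{K_0(R-s)-K_0(-R-s)}{s^\alpha}\,ds .
\]
The $K_0(-R-s)$ term is $O(e^{-\kappa R})$, and $\int_0^\infty s^{-\alpha}K_0(R-s)\,ds\to 0$ by splitting $(0,\infty)=(0,R/2)\cup(R/2,\infty)$ and bounding $K_0(R-s)\le(2\kappa)^{-1}e^{-\kappa R/2}$ on the first piece and $s^{-\alpha}\le(R/2)^{-\alpha}$ on the second, so $\int_{\bR}K_\alpha=0$. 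Then, since the series for $G$ converges in $L^1$, $\int_{\bR}G=\sum_{j\ge0}\int_{\bR}(K_0*K_\alpha^{[j]})$, and combining $\int_{\bR}(f*g)=\big(\int_{\bR}f\big)\big(\int_{\bR}g\big)$ for $L^1$ functions and finite measures with $\int_{\bR}K_\alpha^{[j]}=\big(\int_{\bR}K_\alpha\big)^{j}$ (by induction; the $j=0$ term is $1$ since $K_\alpha^{[0]}=\delta$), every $j\ge1$ term vanishes while the $j=0$ term equals $\int_{\bR}K_0=1/\kappa^2$. Hence $\int_{\bR}G=1/\kappa^2$.

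The main obstacle is the identity $\int_{\bR}K_\alpha=0$: the naive interchange of $\int_{\bR}$ and $\int_0^\infty$ is illegitimate because $\int_0^\infty s^{-\alpha}\|K_0'\|_{L^1}\,ds=\infty$, which forces the truncation-and-limit argument above and the estimate on the resulting boundary terms. (Conceptually this is nothing but the vanishing at $\omega=0$ of the Fourier symbol $(i\omega)^\alpha$ of $\partial^\alpha_\xi$ — i.e. $\partial^\alpha_\xi$ turns integrable derivatives into zero-mean functions — but an elementary proof needs this care.) The remaining steps, Young's inequality and the standard convolution/mass identities, are routine.
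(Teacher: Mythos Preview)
Your proof is correct and follows essentially the same strategy as the paper: both use uniform convergence of the partial sums for continuity, Young's inequality and the geometric series for $L^1$, and the identity $\int_{\bR}K_\alpha=0$ to reduce the total mass to $\int_{\bR}K_0=1/\kappa^2$. The paper's proof is much terser; in particular it simply asserts $\int_{\bR}K_\alpha(\xi)\,d\xi=0$ without justification, whereas you correctly identify this as the only nontrivial point and supply the truncation-and-Fubini argument with the required boundary estimates, which is a genuine improvement in rigor over the paper's presentation.
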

%%%
\begin{proof}
We note that $K_0, K_{\alpha}\in C(\bR)$.
Since the limit $G(\xi)= \dlim_{J\to\infty}\dsum^{J}_{j=0}(K_0*K^{[j]}_{\alpha})$ holds in the sense of uniform convergence, $G$ is continuous.
Moreover, we obtain
\beaa
\|G\|_{L^1} \le \|K_0\|_{L^1} \left( \dsum^{\infty}_{j=0} \|K_{\alpha}\|^{j}_{L^1} \right).
\eeaa
By using
\beaa
\int_{\bR}K_{\alpha}(\xi)d\xi = 0,
\eeaa
we have $\dint_{\bR}G(\xi)d\xi = \dint_{\bR}K_0(\xi)d\xi = \dfrac{1}{\kappa^2}$.
\end{proof}

\medskip
%%%
From the construction of $G$ and Lemma \ref{lem:integ}, we obtain the conclusion for the existence.
%%%
\begin{proposition}\label{lem:exi}
For $g\in C_{b}(\bR)$, $\psi(\xi)=(G*g)(\xi)$ belongs to $C^2_b(\bR)$ and is a solution to \eqref{eq:linear}.
\end{proposition}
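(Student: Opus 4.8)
The plan is to recognise $\psi=G*g$ as $K_0*\tilde\psi$ for a suitable $\tilde\psi\in C_b(\bR)$, and then to exploit the two facts already established: that $K_0$ is a Green's function of $\kappa^2-\partial_\xi^2$, so that $K_0*h\in C^2_b(\bR)$ whenever $h\in C_b(\bR)$, together with Lemma \ref{lem:integ}, which reduces the proof that $\psi$ solves \eqref{eq:linear} to the verification that $\psi$ solves the integral equation \eqref{eq:integ}.

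First I would set $\tilde\psi:=\sum_{j=0}^{\infty}K^{[j]}_{\alpha}*g=\sum_{j=0}^{\infty}\mathcal{K}^{j}_{\alpha}g=(I-\mathcal{K}_{\alpha})^{-1}g$, the series converging in $L^{\infty}(\bR)$ since $\|\mathcal{K}_{\alpha}\|=\|K_{\alpha}\|_{L^1}<1$. I would then check that $\tilde\psi\in C_b(\bR)$: the $j=0$ term is $g$, each term with $j\ge 1$ is the convolution of the $L^1$ function $K^{[j]}_{\alpha}$ with the bounded continuous function $g$, hence bounded and continuous, and the partial sums converge uniformly because $\|K^{[j]}_{\alpha}*g\|_{L^{\infty}}\le\|K_{\alpha}\|_{L^1}^{\,j}\|g\|_{L^{\infty}}$; a uniform limit of bounded continuous functions is bounded continuous. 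By construction $\tilde\psi$ satisfies the resolvent identity $\tilde\psi=g+K_{\alpha}*\tilde\psi$.

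Next I would identify $\psi$ with $K_0*\tilde\psi$. Interchanging the (uniformly convergent, $L^1$-dominated) sum with convolution against $K_0$ gives $K_0*\tilde\psi=\sum_{j=0}^{\infty}K_0*K^{[j]}_{\alpha}*g=G*g=\psi$. Since $\tilde\psi\in C_b(\bR)$, the Green's function property of $K_0$ recalled above yields at once $\psi\in C^2_b(\bR)$, which is the first assertion. For the equation, I would convolve the identity $\tilde\psi=g+K_{\alpha}*\tilde\psi$ with $K_0$ and use commutativity and associativity of convolution:
\beaa
\psi=K_0*\tilde\psi=K_0*g+K_0*(K_{\alpha}*\tilde\psi)=K_0*g+K_{\alpha}*(K_0*\tilde\psi)=K_0*g+K_{\alpha}*\psi,
\eeaa
so $\psi$ solves \eqref{eq:integ}. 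Since $\psi\in C^2_b(\bR)$, Lemma \ref{lem:integ} then gives that $\psi$ solves \eqref{eq:linear}.

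The routine-but-essential obstacle is justifying the convolution manipulations: the associativity $K_0*(K_{\alpha}*\tilde\psi)=K_{\alpha}*(K_0*\tilde\psi)$ and the interchange of the infinite Neumann sum with convolution against $K_0$. Both follow from Fubini's theorem together with the integrability of $K_0$ and of $\sum_{j\ge 1}K^{[j]}_{\alpha}$ in $L^1(\bR)$ and the boundedness of $g$, but this is the point where care is needed; once it is in place, the remaining steps are bookkeeping.
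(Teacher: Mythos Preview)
Your proof is correct and follows the same approach the paper has in mind: the paper simply states that the conclusion follows ``from the construction of $G$ and Lemma~\ref{lem:integ}'', leaving implicit precisely the details you supply---namely, recognising $G*g=K_0*\tilde\psi$ with $\tilde\psi=(I-\mathcal{K}_{\alpha})^{-1}g\in C_b(\bR)$ to get $C^2_b$ regularity, and then verifying the integral equation \eqref{eq:integ} so that Lemma~\ref{lem:integ} applies. Your factorisation through $\tilde\psi$ makes the $C^2_b$ regularity step slightly more transparent than in the paper's terse treatment, but the substance is identical.
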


\bigskip
%%%
In the next step, we establish the maximum principle for the operator $L$.
Let us consider the sign of the fractional derivative $\partial^{\alpha}_{\xi}$ at the minimal point.
%%%
\begin{lemma}\label{lem:deri}
Let $\psi\in C^1_b(\bR)$.
Assume that $\psi(\xi)$ attains the minimum at $\xi^{*}\in\bR$: $\psi(\xi^{*})=\min_{\xi\in\bR}{\red \psi(\xi)}$.
Then, we have
\beaa
\partial^{\alpha}_{\xi}\psi(\xi^{*}) = -\dfrac{\alpha}{\Gamma(1-\alpha)} \int^{+\infty}_{0} \dfrac{(\psi(\xi^{*}-s)-\psi(\xi^{*}))}{s^{1+\alpha}}ds\le 0.
\eeaa
Moreover, $\partial^{\alpha}_{\xi}\psi(\xi^{*})$ attains $0$ at $\xi^{*}\in\bR$ if and only if $\psi(\xi)=\psi(\xi^{*})$ for any $\xi\le\xi^{*}$.
\end{lemma}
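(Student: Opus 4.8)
The plan is to reduce the statement to a single integration by parts that rewrites the singular-integral representation of $\partial^{\alpha}_{\xi}\psi(\xi^{*})$ as a manifestly signed quantity. Starting from
$$\partial^{\alpha}_{\xi}\psi(\xi^{*}) = \frac{1}{\Gamma(1-\alpha)}\int^{+\infty}_{0}\frac{\psi'(\xi^{*}-s)}{s^{\alpha}}\,ds,$$
I would first write $\psi'(\xi^{*}-s) = -\frac{d}{ds}\big(\psi(\xi^{*}-s)-\psi(\xi^{*})\big)$; subtracting the constant $\psi(\xi^{*})$ leaves the integral unchanged but produces an antiderivative that vanishes at $s=0$. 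Integrating by parts with $u=s^{-\alpha}$ and $v=\psi(\xi^{*}-s)-\psi(\xi^{*})$ then gives, formally,
$$\partial^{\alpha}_{\xi}\psi(\xi^{*}) = -\Big[\tfrac{s^{-\alpha}}{\Gamma(1-\alpha)}\big(\psi(\xi^{*}-s)-\psi(\xi^{*})\big)\Big]^{s=+\infty}_{s=0} -\frac{\alpha}{\Gamma(1-\alpha)}\int^{+\infty}_{0}\frac{\psi(\xi^{*}-s)-\psi(\xi^{*})}{s^{1+\alpha}}\,ds,$$
which is the desired identity once the boundary term is shown to vanish.

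Second, I would justify the convergence of the integrals and the vanishing of the boundary term using the hypothesis $\psi\in C^{1}_{b}(\bR)$. Near $s=0$, the mean value theorem gives $|\psi(\xi^{*}-s)-\psi(\xi^{*})|\le \|\psi'\|_{L^{\infty}}\,s$, so $s^{-\alpha}|\psi(\xi^{*}-s)-\psi(\xi^{*})| = O(s^{1-\alpha})\to 0$ (here $\alpha<1$ is used) and the integrand is $O(s^{-\alpha})$, hence integrable at $0$; near $s=+\infty$, $|\psi(\xi^{*}-s)-\psi(\xi^{*})|\le 2\|\psi\|_{L^{\infty}}$ gives $s^{-\alpha}|\psi(\xi^{*}-s)-\psi(\xi^{*})|\to 0$ and the integrand is $O(s^{-1-\alpha})$, hence integrable at infinity. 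This yields
$$\partial^{\alpha}_{\xi}\psi(\xi^{*}) = -\frac{\alpha}{\Gamma(1-\alpha)}\int^{+\infty}_{0}\frac{\psi(\xi^{*}-s)-\psi(\xi^{*})}{s^{1+\alpha}}\,ds.$$

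Third, the sign claim and the equality characterisation follow at once. Since $\xi^{*}$ is a global minimiser, $\psi(\xi^{*}-s)-\psi(\xi^{*})\ge 0$ for every $s>0$, and $\Gamma(1-\alpha)>0$ for $\alpha\in(0,1)$, so the integral is nonnegative and $\partial^{\alpha}_{\xi}\psi(\xi^{*})\le 0$. For the equality, observe that $s\mapsto \big(\psi(\xi^{*}-s)-\psi(\xi^{*})\big)/s^{1+\alpha}$ is nonnegative and continuous on $(0,+\infty)$; a nonnegative continuous function with zero integral vanishes identically, so $\partial^{\alpha}_{\xi}\psi(\xi^{*})=0$ forces $\psi(\xi^{*}-s)=\psi(\xi^{*})$ for all $s>0$, i.e. $\psi\equiv\psi(\xi^{*})$ on $(-\infty,\xi^{*}]$, and the converse is immediate. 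The only genuinely delicate point is making the integration by parts rigorous, in particular the $s\to 0^{+}$ boundary term and the integrability near $s=0$; both are controlled entirely by the $C^{1}_{b}$ hypothesis, so I do not anticipate a substantive obstacle.
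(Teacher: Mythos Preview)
Your proposal is correct and follows essentially the same route as the paper: an integration by parts rewrites $\partial^{\alpha}_{\xi}\psi(\xi^{*})$ as $-\frac{\alpha}{\Gamma(1-\alpha)}\int_0^\infty s^{-1-\alpha}(\psi(\xi^{*}-s)-\psi(\xi^{*}))\,ds$, after which the sign and equality characterisation are immediate from minimality and continuity. If anything, you are more careful than the paper in justifying the vanishing of the boundary terms via the $C^{1}_{b}$ hypothesis.
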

%%%
\begin{proof}
From the fact that
\beaa
\dfrac{d}{ds} \left(\dfrac{\psi(\xi^{*}-s) - \psi(\xi^{*})}{s^{\alpha}} \right) = - \dfrac{\psi'(\xi^{*}-s)}{s^{\alpha}} - \dfrac{\alpha(\psi(\xi^{*}-s) - \psi(\xi^{*}))}{s^{1+\alpha}},
\eeaa
we obtain
\newpage
\beaa
\partial^{\alpha}_{\xi}\psi(\xi^{*}) &=& - \dfrac{1}{\Gamma(1-\alpha)}\left[ \dfrac{\psi(\xi^{*}-s) - \psi(\xi^{*})}{s^{\alpha}} \right]^{s\to +\infty}_{s\to +0} \\
&&\hspace{2cm}- \dfrac{\alpha}{\Gamma(1-\alpha)} \int^{+\infty}_{0} \dfrac{(\psi(\xi^{*}-s)-\psi(\xi^{*}))}{s^{1+\alpha}}ds \\
&=& -\dfrac{\alpha}{\Gamma(1-\alpha)} \int^{+\infty}_{0} \dfrac{(\psi(\xi^{*}-s)-\psi(\xi^{*}))}{s^{1+\alpha}}ds \le 0.
\eeaa
Furthermore, it is easy to see that
if $\psi(\xi)>\psi(\xi^{*})$ holds at the point $\xi\le \xi^{*}$, then $\partial^{\alpha}_{\xi}\psi(\xi^{*})$ must be negative from the continuity of $\psi(\xi)$.
Thus, $\psi(\xi)$ must be constant on $(-\infty,\xi^{*}]$ if $\partial^{\alpha}_{\xi} \psi(\xi^{*}) =0$.
\end{proof}

\medskip
%%%
Let us introduce the maximum principle.
\begin{proposition}\label{prop}
    Suppose that $\psi\in C^1_b(\bR)\cap C^2(\bR)$ satisfies
    \beaa
    L\psi(\xi) \le 0
    \eeaa
    and
    \beaa
    \liminf_{\xi\to-\infty}\psi(\xi)\ge 0,\quad 
    {\red \liminf_{\xi\to+\infty}\psi(\xi)\ge 0.}
    \eeaa
    Then, $\psi(\xi)$ is non-negative.
    {Furthermore, if $\psi(\xi)$ attains $0$ at the position $\xi^{*}\in\bR$, then $\psi(\xi)\equiv 0$ for all $\xi\le \xi^{*}$}.
\end{proposition}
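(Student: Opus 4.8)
The plan is to argue by contradiction, using a suitable translate-and-slide argument combined with Lemma~\ref{lem:deri} to locate a genuine interior negative minimum and derive a sign contradiction from $L\psi\le 0$.

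First I would suppose, for contradiction, that $\inf_{\xi\in\bR}\psi(\xi)=:-m<0$. By the two boundary conditions $\liminf_{\xi\to-\infty}\psi\ge 0$ and $\limsup_{\xi\to+\infty}\psi\ge 0$, the value $-m$ is not approached at either end, so there must be a point where $\psi$ comes close to $-m$ in a bounded region. The subtlety is that the infimum need not be attained, so I would introduce a shifted function: for small $\eta>0$, choose a point $\xi_\eta$ with $\psi(\xi_\eta)<-m+\eta$, and (if necessary to get a clean interior minimum) perturb by a small multiple of a fixed bounded function that grows at $\pm\infty$, e.g. replace $\psi$ by $\psi_\eta(\xi):=\psi(\xi)+\eta\,\chi(\xi)$ where $\chi\in C^2_b(\bR)$ is a nonnegative bump-type function that is bounded below by a positive constant outside a compact set; this forces $\psi_\eta$ to attain its infimum at some interior point $\xi^*=\xi^*_\eta$, with $\psi_\eta(\xi^*)<0$ once $\eta$ is small. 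At $\xi^*$ we have the classical relations $\psi_\eta'(\xi^*)=0$, $\psi_\eta''(\xi^*)\ge 0$, and by Lemma~\ref{lem:deri} applied to $\psi_\eta$ we get $\partial^\alpha_\xi\psi_\eta(\xi^*)\le 0$, hence $-c^\alpha\partial^\alpha_\xi\psi_\eta(\xi^*)\ge 0$. Since $\kappa^2>0$ and $\psi_\eta(\xi^*)<0$, the term $-\kappa^2\psi_\eta(\xi^*)>0$. Therefore
\[
L\psi_\eta(\xi^*)=\psi_\eta''(\xi^*)-c^\alpha\partial^\alpha_\xi\psi_\eta(\xi^*)-\kappa^2\psi_\eta(\xi^*)>0,
\]
while on the other hand $L\psi_\eta(\xi^*)=L\psi(\xi^*)+\eta\,L\chi(\xi^*)\le \eta\,L\chi(\xi^*)$, which tends to $0$ (or stays bounded) as $\eta\to 0$; choosing $\eta$ small enough contradicts the strict positivity above, once one checks the lower bound on $-\kappa^2\psi_\eta(\xi^*)$ is bounded away from $0$ uniformly in $\eta$ (which follows from $\psi_\eta(\xi^*)\le -m+\eta+\eta\sup\chi$). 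This yields $\psi\ge 0$.

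For the strong-maximum-principle part, suppose $\psi(\xi^*)=0=\min_\bR\psi$. Then $\psi$ attains an interior minimum at $\xi^*$, so $\psi''(\xi^*)\ge 0$, and Lemma~\ref{lem:deri} gives $\partial^\alpha_\xi\psi(\xi^*)\le 0$. Plugging into $L\psi(\xi^*)\le 0$:
\[
0\ge L\psi(\xi^*)=\psi''(\xi^*)-c^\alpha\partial^\alpha_\xi\psi(\xi^*)-\kappa^2\cdot 0\ \ge\ -c^\alpha\partial^\alpha_\xi\psi(\xi^*)\ \ge\ 0,
\]
forcing $\partial^\alpha_\xi\psi(\xi^*)=0$ (and $\psi''(\xi^*)=0$). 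By the equality case in Lemma~\ref{lem:deri}, $\partial^\alpha_\xi\psi(\xi^*)=0$ together with $\psi(\xi^*)=\min\psi$ implies $\psi(\xi)=\psi(\xi^*)=0$ for all $\xi\le\xi^*$, which is exactly the claimed conclusion.

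The main obstacle is the first part: the infimum of $\psi$ may fail to be attained, so one cannot directly invoke Lemma~\ref{lem:deri} at a minimizing point, and the perturbation $\eta\chi$ must be chosen carefully so that (a) it does create a genuine interior minimum of $\psi_\eta$, (b) the correction term $\eta L\chi$ is controlled and does not destroy the sign, and (c) the nonlocal operator still behaves well — in particular one should verify that $\partial^\alpha_\xi$ is well-defined on $\psi_\eta$, which is guaranteed if $\chi\in C^1_b(\bR)$. An alternative that avoids a growing $\chi$ is to take a sequence $\xi_n$ along which $\psi(\xi_n)\to -m$, translate $\psi_n(\xi):=\psi(\xi+\xi_n)$, and pass to a locally uniform limit $\psi_\infty$ using interior elliptic-type estimates on $\psi_n$ (the $C^2_b$ bounds propagate through the integral representation $\psi_n=G*g_n$); then $\psi_\infty$ attains its minimum $-m$ at $0$, still satisfies $L\psi_\infty\le 0$ in the limit, and the same one-point computation applies. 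Either route reduces to the same pointwise inequality; the care needed is purely in making the limiting/perturbation argument rigorous for a nonlocal operator on an unbounded domain.
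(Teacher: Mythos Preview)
Your second part---the strong-maximum-principle half---matches the paper's argument line for line.

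For the first part, the paper proceeds much more directly: it simply asserts that if $\psi$ has a negative part then, by the two boundary conditions, there exists $\xi^{*}$ with $\psi(\xi^{*})=\min_{\bR}\psi<0$, and then computes $L\psi(\xi^{*})>0$ in one line using Lemma~\ref{lem:deri}. The paper does not discuss the possibility that the infimum fails to be attained---and you are right that, with only $\limsup_{\xi\to+\infty}\psi\ge 0$ rather than $\liminf$, this is not automatic. So the extra care you put in is addressing a genuine subtlety the paper glosses over. (In every application of the proposition later in the paper, the function $\psi$ is a difference of two functions each having honest limits at $\pm\infty$, so the minimum \emph{is} attained and the shortcut is harmless in context.)

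That said, your perturbation argument as written does not quite close the gap: if $\chi\in C^2_b(\bR)$ is bounded, adding $\eta\chi$ cannot force the infimum of $\psi_\eta$ to be attained when $\inf\psi$ is approached along a sequence $\xi_n\to+\infty$. Having $\chi\ge c_0>0$ outside a compact set $K$ lifts $\psi_\eta$ by $\eta c_0$ outside $K$, but there is no competing point inside $K$ unless $\psi$ already dips close to $-m$ there---which is exactly what you do not know. Your translation-and-compactness alternative is the right repair: translate so that $\psi_n(0)\to -m$, extract a locally-$C^2$ convergent subsequence (the bounds come from $\psi\in C^1_b\cap C^2$ and the equation), check that the nonlocal term passes to the limit, and apply the one-point computation to $\psi_\infty$ at $0$. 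That route is standard and works; the perturbation route would need $\chi$ unbounded, which then conflicts with the domain of $\partial^\alpha_\xi$.
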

%%%
\begin{proof}
    Suppose that $\psi(\xi)$ has a negative part.
    Then, from the assumption, there is $\xi^{*}\in\bR$ such that
    \beaa
    \psi(\xi^{*}) = \min_{\xi\in\bR}\psi(\xi) <0.
    \eeaa
    This implies
    \beaa
    L\psi(\xi^{*}) = \psi''(\xi^{*})- c^{\alpha}\partial^{\alpha}_{\xi} \psi(\xi^{*}) -\kappa^2 \psi(\xi^{*}) >0
    \eeaa
    from Lemma \ref{lem:deri}.
    This is a contradiction of the assumption $L\psi(\xi)\le 0$.

    %%%
    Next, we assume that $\psi(\xi)$ is non-negative and attains 0 at $\xi^{*}\in\bR$.
    Then, we have the following.
    \beaa
    0\ge L\psi(\xi^{*}) = \psi''(\xi^{*}) - c^{\alpha}\partial^{\alpha}_{\xi}\psi(\xi^{*})\ge - c^{\alpha}\partial^{\alpha}_{\xi}\psi(\xi^{*}) \ge 0.
    \eeaa
    Hence, $\psi$ must be zero on $(-\infty,\xi^{*}]$ because $\partial^{\alpha}_{\xi}\psi(\xi^{*})=0$.
\end{proof}

\bigskip
%%%%%%%%%%%%%%%%
Finally, we construct an upper solution and a lower solution to \eqref{eq:tw}.
We introduce a notion of upper and lower solutions.
%%%
\begin{definition}
For $c>0$, $\uphi\in C^2(\bR)$ is an upper solution to \eqref{eq:tw} if $\uphi$ satisfies
\beaa
c^{\alpha}\partial^{\alpha}_{\xi}\uphi(\xi) \ge \uphi''(\xi) + f(\uphi (\xi))
\eeaa
for all $\xi\in\bR$.
Conversely, $\lphi\in C^2(\bR)$ is a lower solution to \eqref{eq:tw} if $\lphi$ satisfies
\beaa
c^{\alpha}\partial^{\alpha}_{\xi}\lphi(\xi) \le \lphi''(\xi) + f(\lphi (\xi))
\eeaa
for all $\xi\in\bR$.
\end{definition}

\medskip
We focus on the property $u=0$ to construct the upper and lower solutions.
The linearized equation around $u=0$ is given as
\beaa
\dfrac{c^{\alpha}}{\Gamma(1-\alpha)} \int^{+\infty}_{0} \dfrac{\phi'(\xi-s)}{s^{\alpha}} ds = \phi''(\xi) + f'(0)\phi(\xi).
\eeaa
Since we have
\beaa
\dfrac{c^{\alpha}}{\Gamma(1-\alpha)} \int^{+\infty}_{0} \dfrac{e^{-\lambda s}}{s^{\alpha}} = \dfrac{(c\lambda)^{\alpha}}{\lambda},
\eeaa
the characteristic equation around $u=0$ is defined as follows:
\beaa
V(\lambda):= \lambda^2 -(c\lambda)^{\alpha}+f'(0)=0.
\eeaa
Then, we have

%%%%%%%%%%%%%%%%
\begin{lemma}\label{lem:poly}
The following statements hold:
\begin{itemize}
    \item[(i)] For $c>c^{*}_\alpha$, there are two positive roots $\lambda_1<\lambda_2$ of $V(\lambda)=0$. Moreover, $V(\lambda)<0$ on $(\lambda_1,\lambda_2)$.
    \item[(ii)] When $c=c^{*}_{\alpha}$, there is a unique positive root $\lambda_1$.
    \item[(iii)] For $c\in (0,c^{*}_{\alpha})$, there are no positive roots.
\end{itemize}
\end{lemma}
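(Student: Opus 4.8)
\emph{Plan.} The idea is to treat $V(\lambda)=\lambda^{2}-(c\lambda)^{\alpha}+f'(0)$ as a smooth function on the open half-line $\lambda\in(0,+\infty)$ and reduce the statement to a one-variable graph analysis. First I would record the end behaviour: $V(0^{+})=f'(0)>0$, while $V(\lambda)\to+\infty$ as $\lambda\to+\infty$ since $\lambda^{2}$ dominates $\lambda^{\alpha}$ for $\alpha\in(0,1)$. Next I differentiate: $V'(\lambda)=2\lambda-\alpha c^{\alpha}\lambda^{\alpha-1}=\lambda^{\alpha-1}\bigl(2\lambda^{2-\alpha}-\alpha c^{\alpha}\bigr)$. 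Because $\lambda^{\alpha-1}>0$, the sign of $V'$ equals that of $2\lambda^{2-\alpha}-\alpha c^{\alpha}$, which is strictly increasing in $\lambda$ and vanishes at the single point $\lambda_{c}:=\bigl(\alpha c^{\alpha}/2\bigr)^{1/(2-\alpha)}$. Hence $V$ is strictly decreasing on $(0,\lambda_{c})$ and strictly increasing on $(\lambda_{c},+\infty)$, so $\lambda_{c}$ is its unique global minimiser, and the number of positive roots of $V$ is $2$, $1$, or $0$ according to whether $V(\lambda_{c})$ is $<0$, $=0$, or $>0$.

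The algebraic core is to evaluate $V(\lambda_{c})$ and locate the threshold. Using $\lambda_{c}^{2-\alpha}=\alpha c^{\alpha}/2$ to write $\lambda_{c}^{2}=\lambda_{c}^{\alpha}\cdot\alpha c^{\alpha}/2$, one gets $V(\lambda_{c})=f'(0)-\tfrac{2-\alpha}{2}\,c^{\alpha}\lambda_{c}^{\alpha}=f'(0)-\tfrac{2-\alpha}{2}\bigl(\tfrac{\alpha}{2}\bigr)^{\alpha/(2-\alpha)}c^{2\alpha/(2-\alpha)}$, where the powers of $c$ are collected via $c^{\alpha}\,(c^{\alpha})^{\alpha/(2-\alpha)}=c^{2\alpha/(2-\alpha)}$. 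Since the coefficient of $c^{2\alpha/(2-\alpha)}$ is a positive constant and the exponent is positive, $V(\lambda_{c})$ is strictly decreasing in $c$; solving $V(\lambda_{c})=0$ for $c$ and raising to the power $(2-\alpha)/(2\alpha)$ yields $c=\bigl(\tfrac{2f'(0)}{2-\alpha}\bigr)^{(2-\alpha)/(2\alpha)}(2/\alpha)^{1/2}$, which simplifies to the stated $c^{*}_{\alpha}$ once one consolidates $2^{1/2}\cdot 2^{(2-\alpha)/(2\alpha)}=2^{1/\alpha}$. Therefore $V(\lambda_{c})<0\iff c>c^{*}_{\alpha}$, $V(\lambda_{c})=0\iff c=c^{*}_{\alpha}$, and $V(\lambda_{c})>0\iff c\in(0,c^{*}_{\alpha})$.

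It then remains to assemble the three cases. If $c>c^{*}_{\alpha}$, then $V$ decreases from $f'(0)>0$ to $V(\lambda_{c})<0$ on $(0,\lambda_{c})$ and increases from $V(\lambda_{c})<0$ back to $+\infty$ on $(\lambda_{c},+\infty)$, so by the intermediate value theorem and strict monotonicity on each branch there is exactly one root $\lambda_{1}\in(0,\lambda_{c})$ and exactly one root $\lambda_{2}\in(\lambda_{c},+\infty)$; in particular $0<\lambda_{1}<\lambda_{c}<\lambda_{2}$, and $V<0$ on $(\lambda_{1},\lambda_{2})$ since $V$ is negative on each of $(\lambda_{1},\lambda_{c}]$ and $[\lambda_{c},\lambda_{2})$ by the same monotonicity. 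If $c=c^{*}_{\alpha}$, the minimum value is $0$, attained only at $\lambda_{c}$, so $\lambda_{1}:=\lambda_{c}$ is the unique positive root. If $c\in(0,c^{*}_{\alpha})$, then $V\ge V(\lambda_{c})>0$ on all of $(0,+\infty)$, so there is no positive root.

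I do not anticipate a genuine conceptual obstacle: the whole argument is the analysis of a single convex-shaped graph. The only delicate point is the bookkeeping of fractional exponents needed to show that the sign change of $V(\lambda_{c})$ occurs exactly at $c^{*}_{\alpha}$, and a secondary bit of care is that $V$ and $V'$ are only defined and smooth on the open half-line (because of $\lambda^{\alpha}$ and $\lambda^{\alpha-1}$), so the monotonicity and critical-point reasoning should be phrased on $(0,+\infty)$ with the value at $\lambda=0$ understood as a limit.
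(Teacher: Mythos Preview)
Your proposal is correct and follows essentially the same approach as the paper: locate the unique minimiser of $V$ on $(0,+\infty)$, evaluate $V$ there, and compare the sign of the minimum value to zero to obtain the trichotomy in $c$. The only cosmetic difference is that the paper establishes the unique minimum by computing $V''>0$ (convexity) rather than by factoring $V'$, and it does not spell out the exponent bookkeeping that identifies the threshold with $c^{*}_{\alpha}$ as carefully as you do.
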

%%%
\begin{proof}
It is obvious that $V(\lambda)$ is smooth on $(0,+\infty)$ and satisfies $V(0)=f'(0)>0$.
Since we obtain
\beaa
V'(\lambda) = 2\lambda - \dfrac{\alpha c^{\alpha}}{\lambda^{1-\alpha}},\quad
V''(\lambda) = 2  + \dfrac{\alpha (1-\alpha)c^{\alpha}}{\lambda^{2-\alpha}}
\eeaa
for $\lambda>0$,
$V(\lambda)$ is convex for $\lambda>0$ and attains the minimum at\\
$\lambda^{*}= (\alpha/2)^{1/(2-\alpha)} c^{\alpha/(2-\alpha)}$.
Moreover, we have
\beaa
V(\lambda^{*})= 2^{-2/(2-\alpha)} (\alpha-2)  (\alpha c^2)^{\alpha/(2-\alpha)}  + f'(0).
\eeaa
Thus, $V(\lambda)=0$ has two positive roots if $c>c^{*}_{\alpha}$, and no positive root when $c<c^{*}_{\alpha}$.
Moreover, $\lambda^{*}$ is a unique positive root of $V(\lambda)=0$ if $c=c^{*}_{\alpha}$.
\end{proof}

\medskip
\begin{remark}
    When we consider the linearized equation for Equation \eqref{eq:Caputo} around $u=0$,
    \beaa
    \partial^{\alpha}_{t} w = w_{xx} + f'(0)w\quad (t>0,\ x\in\bR),
    \eeaa
    it has a solution $w(t,x)= E_{\alpha,1}((c\lambda t)^{\alpha}) e^{\lambda x}$ for all $c\ge c^{*}_{\alpha}$,
    where $\lambda$ is a positive root of $V(\lambda)=0$.
    From \eqref{mittag}, the asymptotic of $w(t,x)$ is approximated by
    \beaa
    w(t,x) \simeq \dfrac{1}{\alpha} e^{\lambda (x+ct)} =  \dfrac{1}{\alpha} e^{\lambda \xi} \quad (t\to +\infty).
    \eeaa
    This consideration suggests that the asymptotic traveling wave solution has an exponential tail at $u=0$.
\end{remark}

\medskip
%%%
Next, let us construct an upper solution.
For convenience, we set
\beaa
N_1(\xi) &:=& \uphi''(\xi) - c^{\alpha}\partial^{\alpha}_{\xi}\uphi(\xi) + f(\uphi (\xi)), \\
N_2(\xi) &:=& \lphi''(\xi) - c^{\alpha}\partial^{\alpha}_{\xi}\lphi(\xi) + f(\lphi (\xi)).
\eeaa
Let $\rho$ be a positive mollifier, that is, $\rho(x)$ is an even function satisfying
\beaa
\rho\in C^{\infty}_{0}(\bR),\quad \rho(x)
\begin{cases}
>0 & |x|<1, \\
=0 & |x|\ge 1,
\end{cases}
\quad \int_{\bR}\rho(x)dx =1.
\eeaa
We set $\rho_{\ep}(x) := \dfrac{1}{\ep}\rho\left(\dfrac{x}{\ep}\right)$.
%%%
For $c> c^{*}_{\alpha}$, we define
\beaa
\upsi(\xi) =
\begin{cases}
e^{\lambda_{1} \xi} &\xi<0, \\
1 &\xi>0,
\end{cases}
\eeaa
where $\lambda_1$ is a minimal positive root of $V(\lambda)$.
Then, we set $\uphi(\xi)=\uphi(\xi;\ep):=(\rho_{\ep}*\upsi)(\xi)$.
It is easy to see that we have
\beaa
\uphi(\xi) =
\begin{cases}
R(\ep\lambda_1) e^{\lambda_1 \xi} &\xi\le -\ep \\
\int^{\xi/\ep}_{-1}\rho(y)dy + e^{\lambda_1\xi} \int^{1}_{\xi/\ep} \rho(y) e^{-\ep \lambda_1 y}dy &|\xi|<\ep \\
1 &\xi\ge\ep,
\end{cases}
\eeaa
where $R(\lambda):={\red \int^{1}_{-1} \rho(y) e^{-\lambda y}dy}$.
%%%
\begin{lemma}\label{lem:up}
For $c> c^{*}_{\alpha}$ and $\ep>0$, $\uphi$ defined as above satisfies the following properties:
\begin{itemize}
    \item[(i)] $\uphi\in C^\infty(\bR)$;
    \item[(ii)] $\uphi$ is non-decreasing;
    \item[(iii)] $\uphi(\xi)\in [0,1]$ for all $\xi\in\bR$.
\end{itemize}
Moreover, $\uphi$ is an upper solution to \eqref{eq:tw} if $\ep$ is sufficiently small.
\end{lemma}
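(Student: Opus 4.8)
The plan is to verify the three structural properties first and then establish the upper-solution inequality by a case analysis in $\xi$, exploiting the linear characteristic equation $V(\lambda_1)=0$ away from the small interval where the mollification matters.

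Properties (i)--(iii) are essentially immediate. Smoothness (i) follows because $\uphi=\rho_\eps*\upsi$ is the convolution of the bounded function $\upsi$ with a $C^\infty_0$ kernel. Monotonicity (ii) follows since $\upsi$ is non-decreasing and $\rho_\eps\ge 0$: for $\xi_1<\xi_2$, $\uphi(\xi_2)-\uphi(\xi_1)=\int_\bR \rho_\eps(y)(\upsi(\xi_2-y)-\upsi(\xi_1-y))\,dy\ge 0$. Property (iii) follows because $0\le\upsi\le 1$ and $\rho_\eps$ integrates to $1$, so $\uphi$ is a weighted average of values in $[0,1]$. I would state these in one or two lines each.

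The substance is the inequality $N_1(\xi)=\uphi''(\xi)-c^\alpha\partial^\alpha_\xi\uphi(\xi)+f(\uphi(\xi))\le 0$ for all $\xi$, i.e. \eqref{inq:up}. I would split $\bR$ into three regions. For $\xi\le-\eps$ we have $\uphi(\xi)=R(\eps\lambda_1)e^{\lambda_1\xi}$, a pure exponential on all of $(-\infty,\xi]$ — wait, not quite: $\partial^\alpha_\xi$ at $\xi$ looks at $\uphi(\xi-s)$ for all $s>0$, and for $\xi\le-\eps$ every such point lies in the exponential region, so $\partial^\alpha_\xi\uphi(\xi)=(c\lambda_1)^\alpha c^{-\alpha}\uphi(\xi)$ and $\uphi''(\xi)=\lambda_1^2\uphi(\xi)$; hence $\uphi''-c^\alpha\partial^\alpha_\xi\uphi+f'(0)\uphi = (\lambda_1^2-(c\lambda_1)^\alpha+f'(0))\uphi = V(\lambda_1)\uphi = 0$, and then $N_1=f(\uphi)-f'(0)\uphi\le 0$ by the upper bound in \eqref{ass:non}. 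For $\xi\ge\eps$ we have $\uphi(\xi)=1$, so $\uphi''(\xi)=0$, $f(\uphi(\xi))=f(1)=0$, and $\partial^\alpha_\xi\uphi(\xi)=\frac{1}{\Gamma(1-\alpha)}\int_0^\infty \uphi'(\xi-s)s^{-\alpha}\,ds\ge 0$ since $\uphi'\ge 0$; thus $N_1(\xi)=-c^\alpha\partial^\alpha_\xi\uphi(\xi)\le 0$. The middle region $|\xi|<\eps$ is where work is needed: here $\uphi$ is a genuine mixture and none of the three terms has a clean sign individually. My approach would be to show that as $\eps\to 0$, $c^\alpha\partial^\alpha_\xi\uphi(\xi)$ stays bounded below by a positive constant (it converges to $(c\lambda_1)^\alpha\lambda_1^{-1}\cdot 1 >0$, essentially because the bulk of the integration in $\partial^\alpha_\xi$ comes from $s$ of order $1$ where $\uphi(\xi-s)$ is near the exponential profile, uniformly in $|\xi|<\eps$), while $|\uphi''(\xi)|$ and $|f(\uphi(\xi))-f'(0)\uphi(\xi)|$ — and even $|f(\uphi(\xi))|$ — blow up at worst like $O(1/\eps)$... no: $\uphi''=\rho_\eps''*\upsi$, and $\upsi$ has a corner at $0$, so $\uphi''=\rho_\eps'*\upsi'$ where $\upsi'$ is bounded plus a jump, giving $\|\uphi''\|_\infty=O(1/\eps)$, which does \emph{not} go to zero. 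So a cruder estimate fails.

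The correct fix — and the main obstacle — is that the exponential rate $\lambda_1$ must be chosen to give room. Since $c>c^*_\alpha$, by Lemma \ref{lem:poly}(i) there are two roots $\lambda_1<\lambda_2$ and $V(\lambda)<0$ strictly on $(\lambda_1,\lambda_2)$. The trick I would use is to replace the naive ansatz by $\upsi(\xi)=e^{\lambda\xi}$ on $\xi<0$ with a slightly supercritical $\lambda\in(\lambda_1,\lambda_2)$ — or, keeping $\lambda_1$ as written, to absorb the $O(1/\eps)$ errors by noting they are supported only on $|\xi|<\eps$ and combine with a scaling argument: substitute $\eta=\xi/\eps$ and show that after rescaling, the diffusion term $\uphi''$ scales like $\eps^{-2}$ times a profile in $\eta$, the fractional term like $\eps^{-\alpha}$... hmm, the scalings differ, so for small $\eps$ the diffusion term $\uphi''$ actually dominates — but on $|\xi|<\eps$ one computes $\uphi''$ explicitly from the formula for $\uphi$ and checks its \emph{sign}: since $\uphi$ transitions from convex-up exponential behavior to the flat state $1$, $\uphi''$ is eventually negative there. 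A clean route: note $\uphi''(\xi)=\lambda_1(\rho_\eps*\upsi')(\xi)-\lambda_1^2 e^{\lambda_1\xi}(\text{tail correction})\le \lambda_1\uphi'(\xi)$ minus a nonnegative term, so in fact $\uphi''(\xi)\le \lambda_1\uphi'(\xi)$ pointwise; then $N_1\le \lambda_1\uphi'-c^\alpha\partial^\alpha_\xi\uphi+f(\uphi)$, and on $|\xi|<\eps$ one bounds $\uphi'\le C$ (NOT $C/\eps$ — wait, $\uphi'=\rho_\eps*\upsi'$ and $\upsi'$ is bounded by $\lambda_1$, so indeed $\|\uphi'\|_\infty\le\lambda_1$, uniform in $\eps$!). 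That is the key realization: differentiating the corner of $\upsi$ once gives a bounded function, so $\uphi',\uphi''$ need care but $\uphi'$ is uniformly bounded. With $\uphi''\le\lambda_1\uphi'$ and $f(\uphi)\le f'(0)$ bounded, while $c^\alpha\partial^\alpha_\xi\uphi(\xi)\to(c\lambda_1)^\alpha/\lambda_1$ uniformly on $|\xi|<\eps$ as $\eps\to0$ — here one needs a dominated-convergence / uniform-continuity argument showing $\partial^\alpha_\xi\uphi(\xi)$ for $|\xi|<\eps$ is close to $\partial^\alpha_\xi(e^{\lambda_1\cdot})(0)$ — one still needs the right side to beat the left. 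The honest statement is: on $|\xi|<\eps$, $\uphi''(\xi)=O(1/\eps)$ genuinely (second derivative of a mollified corner), so the only way \eqref{inq:up} can hold for small $\eps$ is that $\uphi''$ carries a favorable \emph{sign} there. I would therefore compute $\uphi''$ on $|\xi|<\eps$ from the explicit formula, write $\uphi''(\xi)=\tfrac{1}{\eps}[\rho'(\xi/\eps)(\upsi(\cdot))\ldots]$ and verify $\uphi''(\xi)=\lambda_1\int_{-1}^{1}\rho'(y)\,\mathbf 1_{\{\xi-\eps y<0\}}e^{\lambda_1(\xi-\eps y)}\,dy/1$... and show this integral is $\le 0$ for $\xi$ in a left portion and controlled on the right, ultimately reducing to: $\uphi'$ is unimodal, so $\uphi''\le 0$ on the right half of $(-\eps,\eps)$ and the positive part on the left half is compensated by $-c^\alpha\partial^\alpha_\xi\uphi<0$ together with the strict inequality $\uphi''(\xi)\le\lambda_1\uphi'(\xi)$ inherited from the exponential (which holds because $\upsi''\le\lambda_1\upsi'$ in the distributional sense and mollification preserves this). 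Then $N_1(\xi)\le\lambda_1\uphi'(\xi)-c^\alpha\partial^\alpha_\xi\uphi(\xi)+f(\uphi(\xi))$, and since for small $\eps$ the fractional term is close to the positive constant $(c\lambda_1)^\alpha/\lambda_1 = \lambda_1+f'(0)/\lambda_1$ (using $V(\lambda_1)=0$) while $\lambda_1\uphi'(\xi)\le\lambda_1^2$ is bounded and $f(\uphi)\le f'(0)$, one checks the budget: $\lambda_1\uphi'-c^\alpha\partial^\alpha_\xi\uphi+f(\uphi)\le \lambda_1^2-(\lambda_1^2+f'(0))+f'(0)+o(1)=o(1)$, which is not obviously $\le0$ — so the final decisive input must be the \emph{strict} negativity $V(\lambda)<0$ on $(\lambda_1,\lambda_2)$: one picks the interior exponential rate to be $\lambda_1$ but observes the corner smoothing only ever \emph{flattens} the profile, pushing the effective local rate below $\lambda_1$ into the region where $V<0$, hence $N_1<0$ with strict slack that dominates the $o(1)$. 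I expect this last compensation argument — making rigorous that the mollified corner's local contribution falls in the strictly-subcharacteristic regime — to be the main obstacle, and I would handle it by a direct (if slightly tedious) estimate of the three terms on $|\xi|<\eps$ using the explicit formula for $\uphi$, taking $\eps$ small enough at the end.
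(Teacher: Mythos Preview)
Your treatment of properties (i)--(iii) and of the two outer regions $\xi\le-\eps$ and $\xi\ge\eps$ is correct and matches the paper exactly. The gap is entirely in the middle region $|\xi|<\eps$, and it is a concrete one rather than a matter of tedium.

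The observation you are missing is that on $|\xi|<\eps$ the value $\uphi(\xi)$ tends to $1$, not to something strictly between $0$ and $1$. With $\eta=\xi/\eps\in(-1,1)$ fixed, the explicit formula gives
\[
\uphi(\xi)=\int_{-1}^{\eta}\rho(y)\,dy+e^{\lambda_1\xi}\int_{\eta}^{1}\rho(y)e^{-\eps\lambda_1 y}\,dy
\;\longrightarrow\;\int_{-1}^{\eta}\rho+\int_{\eta}^{1}\rho=1
\]
uniformly in $\eta$, hence $f(\uphi(\xi))=f(1)+O(\eps)=O(\eps)$. You instead bound $f(\uphi)\le f'(0)$, and this single overestimate is exactly what reduces your budget to $o(1)$ rather than $-f'(0)+o(1)$. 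Indeed, your own inequality $\uphi''\le\lambda_1\uphi'$ (which is correct: $\upsi''-\lambda_1\upsi'=-\lambda_1\delta_0\le 0$ in the distributional sense, and mollification preserves this) already yields
\[
N_1\le \lambda_1\uphi'-c^\alpha\partial^\alpha_\xi\uphi+f(\uphi)
\le \lambda_1^2-(c\lambda_1)^\alpha+O(\eps)=-f'(0)+O(\eps)<0
\]
once $f(\uphi)=O(\eps)$ is used. No appeal to the strict negativity of $V$ on $(\lambda_1,\lambda_2)$, no ``compensation argument,'' and no perturbation of the exponential rate is needed; that final speculation in your proposal is a red herring.

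The paper's argument is even more direct: it computes $\uphi''(\xi)=-\tfrac{\lambda_1}{\eps}\rho(\eta)+\lambda_1^2\int_\eta^1\rho(y)\,dy+O(\eps)$ explicitly (the downward jump of $\upsi'$ at $0$ produces the large \emph{negative} term $-\tfrac{\lambda_1}{\eps}\rho(\eta)$, which is favourable, not harmful as you feared), notes $\partial^\alpha_\xi\uphi(\xi)=\lambda_1^\alpha+O(\eps)$ and $f(\uphi)=O(\eps)$, and bounds
\[
N_1(\xi)=-\tfrac{\lambda_1}{\eps}\rho(\eta)+\lambda_1^2\int_\eta^1\rho(y)\,dy-(c\lambda_1)^\alpha+O(\eps)
\le \lambda_1^2-(c\lambda_1)^\alpha+O(\eps)=-f'(0)+O(\eps).
\]
So the ``main obstacle'' you anticipate dissolves once you compute $\uphi''$ and $f(\uphi)$ in the scaled variable $\eta$ and notice that the $O(1/\eps)$ contribution carries the \emph{right} sign.
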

%%%
\begin{proof}
(i) is derived from the general theory of the mollifier.
Since $\upsi(\xi)$ is a non-decreasing function connecting $0$ to $1$, we have (ii) and (iii).

%%%
In the rest of the proof, we show that $\uphi$ is an upper solution to \eqref{eq:tw}.
For $\xi\le -\ep$, we have
\beaa
N_1(\xi) &=& R(\ep\lambda_1) \left( \lambda^2_1 e^{\lambda_1 \xi} - \dfrac{c^{\alpha}\lambda_1}{\Gamma(1-\alpha)} \int^{+\infty}_{0} \dfrac{e^{\lambda_1(\xi-s)}}{s^{\alpha}}ds \right) + f(\uphi(\xi))  \\
&=& R(\ep\lambda_1)e^{\lambda_1\xi} \left\{ \lambda^2_1  - (c\lambda_1)^{\alpha} \right\} + f(\uphi(\xi)) \\
&=&  f(\uphi(\xi)) - f'(0) \uphi(\xi) \le 0
\eeaa
from the fact that $V(\lambda_1)=0$ and assumption \eqref{ass:non}.

%%%
When $\xi \ge \ep$, we obtain
\beaa
N_1(\xi) = - \dfrac{c^{\alpha}}{\Gamma(1-\alpha)} \int^{+\infty}_{0} \dfrac{\uphi'(\xi-s)}{s^{\alpha}} ds < 0
\eeaa
because $\uphi$ is non-decreasing.
%%%
Thus, for any $\ep>0$, we have $N_1(\xi)\le 0$ on $|\xi|\ge\ep$.

%%%
In the case that $|\xi|<\ep$, we put $\eta=\xi/\ep$.
Then, the first and second derivatives satisfy
\beaa
\uphi'(\xi) &=& \lambda_1 e^{\lambda_1\xi} \int^{1}_{\xi/\ep} \rho(x)e^{-\ep\lambda_1 y} dy \\
&=&  \lambda_1 e^{\ep\lambda_1 \eta} \int^{1}_{\eta} \rho(y)e^{-\ep\lambda_1 y} dy = \lambda_1 \int^{1}_{\eta} \rho(y)dy + O(\ep), \\
\uphi''(\xi) &=& \lambda^2_1 e^{\lambda_1\xi} \int^{1}_{\xi/\ep} \rho(y)e^{-\ep\lambda_1 y} dy - \dfrac{\lambda_1}{\ep}\rho\left(\dfrac{\xi}{\ep}\right) \\
&=& \lambda^2_1 e^{\ep\lambda_1\eta} \int^{1}_{\eta} \rho(y)e^{-\ep\lambda_1 y} dy - \dfrac{\lambda_1}{\ep}\rho\left(\eta\right) = - \dfrac{\lambda_1}{\ep}\rho(\eta) + \lambda^2_1 \int^{1}_{\eta} \rho(y)dy + O(\ep)
\eeaa
for $\eta\in[-1,1]$ if $\ep$ is sufficiently small.
Furthermore, for any $\eta\in[-1,1]$ and sufficiently small $\ep>0$, we approximate
\beaa
f(\uphi(\xi)) = f(1) + O(\ep) = O(\ep)
\eeaa
and
\beaa
\partial^{\alpha}_{\xi}\uphi(\xi) &=& \dfrac{1}{\Gamma(1-\alpha)} \left\{ \int^{+\infty}_{\ep(\eta+1)} \dfrac{\uphi'(\ep\eta-s)}{s^{\alpha}} ds + \int^{\ep(\eta+1)}_{0} \dfrac{\uphi'(\ep\eta-s)}{s^{\alpha}} ds \right\} \\
&=& \dfrac{\lambda_1 R(\ep\lambda_1)}{\Gamma(1-\alpha)} \int^{+\infty}_{\ep(\eta+1)} \dfrac{e^{\ep\eta-s}}{s^{\alpha}} ds  + O(\ep) = \lambda^{\alpha}_1 + O(\ep).
\eeaa
Thus, we deduce
\beaa
N_1(\xi) &=& \uphi''(\xi) - c^{\alpha}\partial^{\alpha}_{\xi}\uphi(\xi) + f(\uphi (\xi)) \\
&=& - \dfrac{\lambda_1}{\ep}\rho(\eta) + \lambda^2_1 \int^{1}_{\eta} \rho(y)dy - (c\lambda_1)^{\alpha}  + O(\ep)
\eeaa
for all $\eta\in (-1,1)$ and sufficiently small $\ep>0$.
Since we know $\rho(\eta)\ge 0$ and have
\beaa
\lambda^2_1 \int^{1}_{\eta} \rho(y)dy - (c\lambda_1)^{\alpha} \le \lambda^2_1 - (c\lambda_1)^{\alpha} = -f'(0) < 0,
\eeaa
$N_1(\xi)$ is negative on $(-\ep,\ep)$ as long as $\ep$ is sufficiently small.
\end{proof}

\medskip
%%%
Next, we construct a lower solution.
For $c>c^{*}_{\alpha}$, let $\psi(\xi)= e^{\lambda_1\xi}-he^{\nu\lambda_1\xi}$ with $\nu\in (1,\min\{1+a,\lambda_2/\lambda_1\})$,
where $h>1$ is a sufficiently large constant satisfying
\be\label{condi:h}
\dfrac{1}{h^2} \le -\dfrac{-h R(\ep\nu\lambda_1)V(\nu\lambda_1)}{M \{R(\ep\lambda_1)\}^{1+a}} e^{(\nu-1)/(1+a-\nu)}.
\ee
We remark that the right-hand side is strictly positive for any $\ep\in[0,1)$.
We fix $h$, satisfying \eqref{condi:h} for all $\ep\in[0,1)$.
When we set $\xi_{0}:= -\dfrac{\log h}{\lambda_1(\nu-1)}<0$, we have
\beaa
\psi(\xi)
\begin{cases}
>0 & \xi<\xi_0 \\
=0 & \xi=\xi_0 \\
<0 & \xi>\xi_0.
\end{cases}
\eeaa
Now, we set
\beaa
\lpsi(\xi) =
\begin{cases}
e^{\lambda_{1} \xi} - h e^{ \nu \lambda_1 \xi} &\xi< \xi_0, \\
0 &\xi>\xi_0
\end{cases}
\eeaa
and define $\lphi(\xi)= \lphi(\xi;\ep) := (\rho_{\ep}*\lpsi)(\xi)$.
From direct computation, we obtain
\beaa
\lphi(\xi) =
\begin{cases}
R(\ep\lambda_1)e^{\lambda_1\xi}  -h R(\ep\nu\lambda_1) e^{\nu \lambda_1\xi} &\xi \le \xi_0 - \ep, \\
e^{\lambda_1\xi} \int^{1}_{(\xi-\xi_0)/\ep} \rho(y)e^{-\ep\lambda_1 y}dy - h e^{\nu\lambda_1\xi} \int^{1}_{(\xi-\xi_0)/\ep} \rho(y)e^{-\ep\nu\lambda_1 y}dy &|\xi-\xi_0|<\ep \\
0 & \xi\ge \xi_0 + \ep.
\end{cases}
\eeaa

%%%
\begin{lemma}\label{lem:low}
For $c> c^{*}_{\alpha}$ and $\ep>0$, $\lphi$ defined as above satisfies the following properties:
\begin{itemize}
    \item[(i)] $\lphi\in C^\infty(\bR)$;
    \item[(ii)] $\lphi$ is non-negative on $\bR$ and positive on $(-\infty,\xi_0+\ep)$;
    \item[(iii)] $\lphi(\xi) \le \uphi(\xi)$ for all $\xi\in\bR$.
\end{itemize}
Moreover, $\lphi$ is a lower solution to \eqref{eq:tw} if $\ep$ is sufficiently small.
\end{lemma}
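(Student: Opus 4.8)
The plan is to read off (i)--(iii) from the mollifier construction and then to verify the lower‑solution inequality $N_2(\xi)=\lphi''(\xi)-c^{\alpha}\partial^{\alpha}_{\xi}\lphi(\xi)+f(\lphi(\xi))\ge 0$ separately on the three regions cut out by the support of $\rho_\eps$, namely $(-\infty,\xi_0-\eps]$, $(\xi_0-\eps,\xi_0+\eps)$ and $[\xi_0+\eps,+\infty)$. For (i)--(iii): smoothness is the standard mollifier estimate since $\lpsi\in L^{\infty}_{\mathrm{loc}}(\bR)$; since $\psi>0$ on $(-\infty,\xi_0)$ (by the sign discussion preceding the statement) and $\lpsi\equiv 0$ on $[\xi_0,+\infty)$, we have $\lpsi\ge 0$, hence $\lphi=\rho_\eps*\lpsi\ge 0$, and for $\xi<\xi_0+\eps$ the support of $\rho_\eps(\xi-\cdot)$ meets $(-\infty,\xi_0)$ in a set of positive measure, giving $\lphi(\xi)>0$; finally $\lpsi\le\upsi$ pointwise ($e^{\lambda_1\xi}-he^{\nu\lambda_1\xi}\le e^{\lambda_1\xi}=\upsi(\xi)$ on $(-\infty,\xi_0)$, and $\lpsi=0\le\upsi$ elsewhere), so convolving with $\rho_\eps$ yields $\lphi\le\uphi$.

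On the outer regions the argument is short. On $[\xi_0+\eps,+\infty)$ one has $\lphi\equiv 0$, which by (ii) is the global minimum of $\lphi\in C^1_b(\bR)$; Lemma~\ref{lem:deri} then gives $\partial^{\alpha}_{\xi}\lphi\le 0$, so $N_2=-c^{\alpha}\partial^{\alpha}_{\xi}\lphi\ge 0$ there. On $(-\infty,\xi_0-\eps]$, $\lphi$ is the explicit combination $R(\eps\lambda_1)e^{\lambda_1\xi}-hR(\eps\nu\lambda_1)e^{\nu\lambda_1\xi}$; using $\frac{1}{\Gamma(1-\alpha)}\int_0^{\infty}s^{-\alpha}e^{-\lambda s}\,ds=\lambda^{\alpha-1}$ to evaluate $\partial^{\alpha}_{\xi}$ on the two exponentials and using $V(\lambda_1)=0$ reduces $N_2(\xi)$ to $\big(f(\lphi(\xi))-f'(0)\lphi(\xi)\big)-hR(\eps\nu\lambda_1)V(\nu\lambda_1)e^{\nu\lambda_1\xi}$. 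Here $V(\nu\lambda_1)<0$ because $\nu\lambda_1\in(\lambda_1,\lambda_2)$ by Lemma~\ref{lem:poly}, so the last term is strictly positive; by \eqref{ass:non} and $0\le\lphi(\xi)\le R(\eps\lambda_1)e^{\lambda_1\xi}$ the bracketed term is $\ge-MR(\eps\lambda_1)^{1+a}e^{(1+a)\lambda_1\xi}$, and since $1+a>\nu$ this is dominated by the positive term on all of $(-\infty,\xi_0-\eps]$, the worst point being $\xi=\xi_0-\eps$, where $e^{(1+a-\nu)\lambda_1\xi}\le h^{-(1+a-\nu)/(\nu-1)}$; this is precisely what the $\eps$‑uniform choice \eqref{condi:h} of $h$ ensures (legitimate because $R(\eps\lambda_1)$ and $R(\eps\nu\lambda_1)$ stay bounded above and away from $0$ for $\eps\in[0,1)$).

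The main obstacle is the transition region $(\xi_0-\eps,\xi_0+\eps)$, where $\lphi$ is no longer an exponential and, since $\lpsi$ has a corner at $\xi_0$, the second derivative $\lphi''=\rho_\eps*\lpsi''$ carries a contribution of size $O(1/\eps)$ whose sign must be controlled. Because the one‑sided derivative $\psi'(\xi_0)=(1-\nu)\lambda_1e^{\lambda_1\xi_0}$ is negative while $\lpsi$ is flat to the right of $\xi_0$, one has the distributional identity $\lpsi''=\psi''\mathbf{1}_{(-\infty,\xi_0)}+(\nu-1)\lambda_1e^{\lambda_1\xi_0}\,\delta_{\xi_0}$, and commuting $\partial^{\alpha}_{\xi}$ with the convolution gives
\[
N_2(\xi)=(\rho_\eps*W)(\xi)+(\nu-1)\lambda_1e^{\lambda_1\xi_0}\rho_\eps(\xi-\xi_0)+f(\lphi(\xi)),\qquad W:=\psi''\mathbf{1}_{(-\infty,\xi_0)}-c^{\alpha}\partial^{\alpha}_{\xi}\lpsi.
\]
The middle term is $\ge 0$; with $V(\lambda_1)=0$ one computes $W=-f'(0)\psi-hV(\nu\lambda_1)e^{\nu\lambda_1\cdot}$ on $(-\infty,\xi_0)$, whose left limit at $\xi_0$ is $\beta:=-hV(\nu\lambda_1)e^{\nu\lambda_1\xi_0}>0$, while $W\ge 0$ on $(\xi_0,+\infty)$ by the integration‑by‑parts identity in the proof of Lemma~\ref{lem:deri} applied to the Lipschitz function $\lpsi$ (which vanishes at $-\infty$ and attains its minimum $0$ on $(\xi_0,+\infty)$). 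Since $\lphi=O(\eps)$ uniformly on this region, for $\eps$ small enough one has $W\ge\beta/2$ on $(\xi_0-2\eps,\xi_0)$ and $f(\lphi)\ge 0$ (because $f(u)\ge u\big(f'(0)-Mu^{a}\big)\ge 0$ for $u\ge0$ small by \eqref{ass:non}), so $(\rho_\eps*W)(\xi)\ge 0$ and all three terms are non‑negative, whence $N_2\ge 0$. Choosing $\eps$ small enough for this last estimate---the outer regions needing only the fixed $h$ from \eqref{condi:h}---completes the proof.
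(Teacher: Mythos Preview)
Your argument is correct. Parts (i)--(iii) and the two outer regions ($\xi\le\xi_0-\eps$ and $\xi\ge\xi_0+\eps$) coincide with the paper's proof; in particular the left region uses the same reduction to $-MR(\eps\lambda_1)^{1+a}e^{(1+a)\lambda_1\xi}-hR(\eps\nu\lambda_1)V(\nu\lambda_1)e^{\nu\lambda_1\xi}$ and the same appeal to \eqref{condi:h}, and the right region uses Lemma~\ref{lem:deri} at the global minimum $0$.

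The genuine difference is in the transition region $|\xi-\xi_0|<\eps$. The paper proceeds by a direct asymptotic expansion: writing $\eta=(\xi-\xi_0)/\eps$, it computes $\lphi'$, $\lphi''$, $f(\lphi)$ and $\partial^{\alpha}_{\xi}\lphi$ each to $O(\eps)$, finding a positive $O(1/\eps)$ term $\tfrac{\lambda_1 e^{\lambda_1\xi_0}(\nu-1)}{\eps}\rho(\eta)$ together with an $O(1)$ remainder whose sign is checked via $\lambda_1^2-(c\lambda_1)^{\alpha}=-f'(0)<0$. Your route instead exploits the convolution structure: the distributional identity $\lpsi''=\psi''\mathbf{1}_{(-\infty,\xi_0)}+(\nu-1)\lambda_1 e^{\lambda_1\xi_0}\delta_{\xi_0}$ isolates the same positive term as $(\nu-1)\lambda_1 e^{\lambda_1\xi_0}\rho_\eps(\xi-\xi_0)$, and you dispose of the rest by showing $W=\psi''\mathbf{1}_{(-\infty,\xi_0)}-c^{\alpha}\partial^{\alpha}_{\xi}\lpsi$ is non\-negative on a $2\eps$\nobreakdash-neighbourhood of $\xi_0$ (continuity from the left at the value $\beta>0$, and Lemma~\ref{lem:deri} on the right). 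This is more structural and avoids the term\nobreakdash-by\nobreakdash-term expansion; the paper's computation, on the other hand, makes the exact leading coefficients visible and does not require extending Lemma~\ref{lem:deri} to the merely Lipschitz $\lpsi$. Two small remarks: your $O(\eps)$ bound on $\lphi$ is not actually needed for $f(\lphi)\ge0$, since $0\le\lphi\le\uphi\le1$ and $f\ge0$ on $[0,1]$ already suffice; and the extension of the integration\nobreakdash-by\nobreakdash-parts identity in Lemma~\ref{lem:deri} from $C^1_b$ to Lipschitz functions is routine but worth stating explicitly.
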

%%%
\begin{proof}
(i) holds from the general theory of the mollifier.
$\lphi$ is non-negative because $\lpsi$ is non-negative.
Also, $\lpsi$ is positive on $(-\infty,\xi_0]$, and thus $\lphi$ is positive on $(-\infty,\xi_0+\ep)$.
Hence, we have (ii).
(iii) is derived from the fact that $\lpsi< \upsi$ on $\bR$.

%%%

\medskip
Next, when $\xi<\xi_0-\ep$,
we deduce
{\small \beaa
N_2(\xi) &=& \lambda^2_1 R(\ep\lambda_1) e^{\lambda_1\xi} - h(\nu\lambda_1)^2 R(\ep\nu \lambda_1) e^{\nu\lambda_1\xi} + f(R(\ep\lambda_1) e^{\lambda_{1} \xi} -  h R(\ep\nu\lambda_1) e^{ \nu \lambda_1 \xi}) \\
&&  - \dfrac{c^{\alpha}}{\Gamma(1-\alpha)} \left\{ \lambda_1 R(\ep\lambda_1)\int^{+\infty}_{0} \dfrac{e^{\lambda_1(\xi-s)}}{s^{\alpha}}ds - h\nu\lambda_1 R(\ep\nu\lambda_1)\int^{+\infty}_{0} \dfrac{e^{\nu\lambda_1(\xi-s)}}{s^{\alpha}}ds  \right\} \\
&=& \lambda^2_1 e^{\lambda_1\xi} - h(\nu\lambda_1)^2 e^{\nu\lambda_1\xi} +f(R(\ep\lambda_1) e^{\lambda_{1} \xi} -  h R(\ep\nu\lambda_1) e^{ \nu \lambda_1 \xi}) \\
&& \quad\quad- (c\lambda_1)^{\alpha}e^{\lambda_1\xi} - h(c\nu\lambda_1)^{\alpha}e^{\nu\lambda_1\xi} \\
&=& f(R(\ep\lambda_1) e^{\lambda_{1} \xi} -  h R(\ep\nu\lambda_1) e^{ \nu \lambda_1 \xi}) - f'(0)(R(\ep\lambda_1) e^{\lambda_{1} \xi} -  h R(\ep\nu\lambda_1) e^{ \nu \lambda_1 \xi}) \\
&& \quad \quad - R(\ep\nu\lambda_1) hV(\nu\lambda_1)e^{\nu\lambda_1\xi} \\
&\ge & {- M (R(\ep\lambda_1) e^{\lambda_{1} \xi} -  h R(\ep\nu\lambda_1) e^{ \nu \lambda_1 \xi})^{1+a}} - R(\ep\nu\lambda_1) hV(\nu\lambda_1)e^{\nu\lambda_1\xi}  \\
&\ge & - M \{R(\ep\lambda_1)\}^{1+a}e^{(1+a)\lambda_1 \xi} - hR(\ep\nu\lambda_1)V(\nu\lambda_1)e^{\nu\lambda_1\xi}.
\eeaa}We note that $V(\nu\lambda_1)<0$.
Since it is easy to show that the last function is positive if
\beaa
\xi \le \dfrac{1}{(1+a-\nu)\lambda_1} \log\left( \dfrac{-h R(\ep\nu\lambda_1)V(\nu\lambda_1)}{M \{R(\ep\lambda_1)\}^{1+a} } \right) =: \xi^{*},
\eeaa
we obtain $N_2(\xi)\ge 0$ for all $\xi<\xi_0$ when $\xi_0\le \xi^{*}$.
It is easy to show that $\xi_0\le \xi^{*}$ holds if and only if $h$ satisfies \eqref{condi:h}.
Then, we obtain $N_2(\xi)\ge 0$ for $\xi<\xi_0$.

%%%
Next, we consider the case that $\xi\ge \xi_0 +\ep$.
Since $\lphi(\xi)\equiv 0$ on $[\xi_0+\ep,+\infty)$ and $\min_{\xi\in\bR}\lphi(\xi)=0$,
we have
\beaa
N_2(\xi) = -c^{\alpha}\partial^{\alpha}_{\xi}\lphi(\xi) > 0
\eeaa
by using Lemma \ref{lem:deri}.

%%%
Finally, in the case that $|\xi-\xi_0|<\ep$, we apply the same argument of the proof of Lemma \ref{lem:up} to show.
Let $\eta = (\xi-\xi_0)/\ep$.
Then, for sufficiently small $\ep>0$, the first derivative is approximated by
\beaa
\lphi'(\xi) &=& \lambda_1 e^{\lambda_1\xi} \int^{1}_{(\xi-\xi_0)/\ep} \rho(y)e^{-\ep\lambda_1 y}dy - h\nu\lambda_1 e^{\nu\lambda_1\xi} \int^{1}_{(\xi-\xi_0)/\ep} \rho(y)e^{-\ep\nu\lambda_1 y}dy \\
&=& \lambda_1 e^{\lambda_1(\xi_0+\ep\eta) } \int^{1}_{\eta} \rho(y)e^{-\ep\lambda_1 y}dy - h\nu\lambda_1 e^{\nu\lambda_1(\xi_0+\ep\eta)} \int^{1}_{\eta} \rho(y)e^{-\ep\nu\lambda_1 y}dy \\
&=& \lambda_1 e^{\lambda_1 \xi_0} (1-\nu)\int^{1}_{\eta}\rho(y)dy  + O(\ep).
\eeaa
Here, we use the fact that $e^{\lambda_1\xi_0}=he^{\nu\lambda_1\xi_0}$.
Also, the approximation of the second derivative is computed as
\beaa
\lphi''(\xi) &=& \lambda^2_1 e^{\lambda_1\xi} \int^{1}_{(\xi-\xi_0)/\ep} \rho(y)e^{-\ep\lambda_1 y}dy - h(\nu\lambda_1)^2 e^{\nu\lambda_1\xi} \int^{1}_{(\xi-\xi_0)/\ep} \rho(y)e^{-\ep\nu\lambda_1 y}dy \\
&&\quad\quad + \dfrac{\lambda_1 e^{\lambda_1\xi_0} (1-\nu)}{\ep} \rho\left(\dfrac{\xi-\xi_0}{\ep}\right) \\
&=& e^{\lambda_1\xi_0} \left\{ \lambda^2_1 e^{\ep\lambda_1\eta} -  (\nu\lambda_1)^2e^{\ep\nu\lambda_1\eta} \right\} \int^{1}_{\eta}\rho(y)e^{-\ep\lambda_1 y}dy + \dfrac{\lambda_1 e^{\lambda_1\xi_0} (\nu-1)}{\ep} \rho(\eta) \\
&=& \dfrac{\lambda_1 e^{\lambda_1\xi_0} (\nu-1)}{\ep} \rho(\eta)
+ e^{\lambda_1\xi_0}\lambda^2_1 ( 1 -\nu^2) \int^{1}_{\eta}\rho(y)dy + O(\ep).
\eeaa
Moreover, we obtain $f(\lphi(\xi))= O(\ep)$ and
\beaa
\partial^{\alpha}_{\xi}\lphi(\xi) &=& \dfrac{1}{\Gamma(1-\alpha)} \left[\int^{+\infty}_{\xi-\xi_0+\ep} + \int^{\xi-\xi_0+\ep}_0 \right] \dfrac{\lphi'(\xi-s)}{s^{\alpha}}ds \\[2mm]
&=& \dfrac{1}{\Gamma(1-\alpha)} \left[\int^{+\infty}_{\ep(\eta+1)} + \int^{\ep(\eta+1)}_0 \right] \dfrac{\lphi'(\ep\eta+\xi_0-s)}{s^{\alpha}}ds \\[2mm]
&=& \dfrac{1}{\Gamma(1-\alpha)} \int^{+\infty}_{\ep(\eta+1)} \dfrac{\lphi'(\ep\eta+\xi_0-s)}{s^{\alpha}}ds + O(\ep) \\[2mm]
&=& \dfrac{\lambda_1 e^{\lambda_1\xi_0}}{\Gamma(1-\alpha)}  \left\{R(\ep\lambda_1) \int^{+\infty}_{\ep(\eta+1)} \dfrac{e^{\lambda_1(\ep\eta -s)}}{s^{\alpha}}ds -
\nu R(\ep\nu\lambda_1) \int^{+\infty}_{\ep(\eta+1)} \dfrac{e^{\nu\lambda_1(\ep\eta -s)}}{s^{\alpha}}ds
\right\} + O(\ep) \\[2mm]
&=& \lambda^{\alpha}_1 e^{\lambda_1\xi_0}(1 - \nu^{\alpha}) + O(\ep)
\eeaa
for all $\eta\in(-1,1)$ and sufficiently small $\ep>0$.
Thus, we have
\beaa
N_2(\xi) &{\red \le}& \dfrac{\lambda_1 e^{\lambda_1\xi_0} (\nu-1)}{\ep} \rho(\eta)
- e^{\lambda_1\xi_0}( \nu^2-1) \left\{ \lambda^2_1 \int^{1}_{\eta}\rho(y)dy - (c\lambda_1)^{\alpha} \right\} + O(\ep).
\eeaa
Since the $O(1)$ term satisfies
\beaa
e^{\lambda_1\xi_0}(\nu^2-1) \left\{ \lambda^2_1 \int^{1}_{\eta}\rho(y)dy - (c\lambda_1)^{\alpha} \right\} &\le& e^{\lambda_1\xi_0}(\nu^2-1) \{\lambda^2_1 - (c\lambda_1)^{\alpha}\}\\
&\le& - f'(0)e^{\lambda_1\xi_0}(\nu^2-1) < 0,
\eeaa
$N_2(\xi)$ is positive on $|\xi-\xi_0|<\ep$ as long as $\ep$ is sufficiently small.
%%%
\end{proof}

%%%%%%%%%%%%%%%%
\bigskip
\subsection{Proof of Theorem \ref{thm:main} in the case $c>c^{*}_{\alpha}$}\label{subsec:fast}

Fix $c>c^{*}_{\alpha}$.
We give a sufficiently small $\ep>0$ such that $\uphi$ and $\lphi$ become upper and lower solutions to \eqref{eq:tw}, respectively.
Let us define the sequence of functions
\beaa
\begin{cases}
\psi_0 = \uphi, \\
\psi_j = G*(\kappa^2 \psi_{j-1} + f(\psi_{j-1})) &(j=1,2,\ldots),
\end{cases}
\eeaa
where $G$ is defined by \eqref{green}.

%%%
\begin{lemma}\label{lem:seq}
For $j\in\bN\cup\{0\}$, the following hold:
\begin{itemize}
    \item[(i)] $\psi_j \in C^2_b(\bR)$;
    \item[(ii)] $\dlim_{\xi\to -\infty} \psi_j(\xi)=0$ and $\dlim_{\xi\to+\infty}\psi_j(\xi)=1$ hold;
    \item[(iii)] $\lphi(\xi) \le \psi_{j}(\xi) \le \uphi(\xi)\ (\xi\in\bR)$;
    \item[(iv)] $\psi_j$ is non-decreasing function;
\end{itemize}
Furthermore, $\psi_j$ is an increasing function for $j\in\bN$.
\end{lemma}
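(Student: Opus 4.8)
The plan is to argue by induction on $j$, using Proposition~\ref{lem:exi} to realize each $\psi_j$ as a $C^2_b$ solution of the linear problem $L\psi_j+g_{j-1}=0$ with $g_{j-1}:=\kappa^2\psi_{j-1}+f(\psi_{j-1})$, and then to read off all four properties from the maximum principle, Proposition~\ref{prop}. The base case $j=0$ is immediate: Lemma~\ref{lem:up} gives $\psi_0=\uphi\in C^\infty(\bR)$, non-decreasing, $[0,1]$-valued, with $\uphi(\xi)=R(\eps\lambda_1)e^{\lambda_1\xi}\to 0$ as $\xi\to-\infty$ and $\uphi\equiv 1$ on $[\eps,+\infty)$, while Lemma~\ref{lem:low}(iii) gives $\lphi\le\psi_0\le\uphi$. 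A fact used repeatedly is that $h(u):=\kappa^2 u+f(u)$ is strictly increasing on $[0,1]$, since $h'(u)=\kappa^2+f'(u)>0$ by the choice $\kappa^2>\max_{u\in[0,1]}|f'(u)|$; hence $0\le p\le q\le 1$ implies $h(p)\le h(q)$, and if $\psi_{j-1}$ is non-decreasing and $[0,1]$-valued then $g_{j-1}=h\circ\psi_{j-1}$ is non-decreasing with $g_{j-1}(-\infty)=f(0)=0$ and $g_{j-1}(+\infty)=\kappa^2+f(1)=\kappa^2$.

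Assume the statements hold for $\psi_{j-1}$. By the induction hypothesis (iii) together with $\lphi\ge 0$ and $\uphi\le 1$, $\psi_{j-1}$ is $[0,1]$-valued, so $g_{j-1}\in C_b(\bR)$ and Proposition~\ref{lem:exi} yields (i): $\psi_j=G*g_{j-1}\in C^2_b(\bR)$ solves $\psi_j''-c^{\alpha}\partial^{\alpha}_{\xi}\psi_j-\kappa^2\psi_j+g_{j-1}=0$. For (ii), since $G\in L^1(\bR)$ with $\int_\bR G=\kappa^{-2}$ and $g_{j-1}$ is bounded with limits $0$ and $\kappa^2$ at $\mp\infty$, dominated convergence in $(G*g_{j-1})(\xi)=\int_\bR G(y)g_{j-1}(\xi-y)\,dy$ gives $\psi_j(\xi)\to 0$ as $\xi\to-\infty$ and $\psi_j(\xi)\to\kappa^2\cdot\kappa^{-2}=1$ as $\xi\to+\infty$.

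For (iii) I would apply Proposition~\ref{prop} twice. Set $w:=\uphi-\psi_j$; rewriting the upper-solution inequality \eqref{inq:up} as $\uphi''-c^{\alpha}\partial^{\alpha}_{\xi}\uphi\le-f(\uphi)$ and using $L\psi_j=-g_{j-1}$, one obtains $Lw\le -h(\uphi)+h(\psi_{j-1})\le 0$ because $\psi_{j-1}\le\uphi$ with both in $[0,1]$; the boundary hypotheses of Proposition~\ref{prop} hold since $w\to 0$ at both ends by (ii) and the base-case limits, so $w\ge 0$. Symmetrically, $w:=\psi_j-\lphi$ satisfies $Lw\le -h(\psi_{j-1})+h(\lphi)\le 0$ from the lower-solution inequality and $\lphi\le\psi_{j-1}$, with $w\to 0$ at $-\infty$ and $w\to 1$ at $+\infty$, so $w\ge 0$; this is (iii).

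Property (iv) is the delicate point: since $G$ is \emph{not} sign-definite, one cannot deduce monotonicity of $\psi_j$ by differentiating the convolution, even though $g_{j-1}'\ge 0$. Instead I would use that $L$ has constant coefficients and is translation invariant: for fixed $\tau>0$, $w_\tau(\xi):=\psi_j(\xi+\tau)-\psi_j(\xi)$ satisfies $Lw_\tau(\xi)=-\bigl(g_{j-1}(\xi+\tau)-g_{j-1}(\xi)\bigr)\le 0$, since $g_{j-1}=h\circ\psi_{j-1}$ is non-decreasing by induction hypothesis (iv) and monotonicity of $h$; as $w_\tau\in C^2_b(\bR)$ with $w_\tau\to 0$ at $\pm\infty$ by (ii), Proposition~\ref{prop} gives $w_\tau\ge 0$ for every $\tau>0$, i.e. $\psi_j$ is non-decreasing. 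For $j\in\bN$, strict monotonicity follows from the second assertion of Proposition~\ref{prop}: if $w_\tau(\xi^*)=0$ for some $\tau>0$ and $\xi^*\in\bR$, then $w_\tau\equiv 0$ on $(-\infty,\xi^*]$, so $\psi_j$ is $\tau$-periodic there and, being non-decreasing with $\lim_{\xi\to-\infty}\psi_j(\xi)=0$, vanishes on $(-\infty,\xi^*+\tau]$, contradicting $\psi_j\ge\lphi$ and Lemma~\ref{lem:low}(ii), by which $\psi_j>0$ on $(-\infty,\xi_0+\eps)$. The main obstacle is precisely step (iv): circumventing the lack of positivity of $G$, for which the translation-plus-maximum-principle device is essential; everything else is bookkeeping, with the one proviso that (iii) must be established before (iv) within each inductive step so that $\psi_{j-1}$ is known to be $[0,1]$-valued and the monotonicity of $h$ is available.
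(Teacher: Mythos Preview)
Your proof is correct and follows essentially the same route as the paper: induction on $j$, with (i) from Proposition~\ref{lem:exi}, (ii) from dominated convergence using $G\in L^1$ with $\int G=\kappa^{-2}$, (iii) from the maximum principle applied to $\uphi-\psi_j$ and $\psi_j-\lphi$, and (iv) from the translation--comparison argument (the paper writes $\psi_{k+1}(\xi-p)\le\psi_{k+1}(\xi)$, you write $\psi_j(\xi+\tau)\ge\psi_j(\xi)$, which is the same device). One minor remark: your final proviso that ``(iii) must be established before (iv)'' is unnecessary, since the $[0,1]$-valuedness needed in step (iv) concerns $\psi_{j-1}$, which is already covered by the induction hypothesis.
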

%%%
\begin{proof}
Since it is obvious that (i)-(iv) hold in the case $j=0$,
we prove them by induction.
Suppose that the statements (i)-(iv) hold for $j=k\ge 0$.

%%%
(i) From the definition of $\psi_{k+1}$ and Proposition \ref{lem:exi},
$\psi_{k+1}\in C^2_b(\bR)$ holds.

%%%
(ii) Since we know that $\psi_k \in C_b$ and $\dlim_{\xi\to+\infty} (\kappa^2 \psi_k(\xi) + f(\psi_k(\xi))) = \kappa^2$,
we deduce
\beaa
\dlim_{\xi\to+\infty} \psi_{k+1} (\xi) = \dlim_{\xi\to+\infty} G*(\kappa^2 \psi_{k} + f(\psi_{k}))(\xi) = \kappa^2 \int_{\bR}G(\xi)d\xi = 1
\eeaa
from the dominated convergence theorem.
The same argument also provides the result of the case $\xi\to-\infty$.

%%%
(iii) Since $\psi_{k+1}$ is a solution to
\beaa
L \psi_{k+1} + \kappa^2 \psi_{k} + f(\psi_k) =0,
\eeaa
we have
\beaa
L\lphi \ge L \psi_{k+1} \ge L \uphi
\eeaa
from the monotonicity of $\kappa^2 u + f(u)$.
Applying Proposition \ref{prop}, statement (iii) holds.

%%%
(iv) Let $\psi_{k+1,p}(\xi):= \psi_{k+1}(\xi-p)$ for $p>0$.
Then, we deduce
\beaa
L \psi_{k+1,p}(\xi) &=& -\kappa^2 \psi_{k}(\xi-p) - f(\psi_{k}(\xi-p)) \\
&\ge& -\kappa^2 \psi_{k}(\xi) - f(\psi_{k}(\xi)) \\
&=& L \psi_{k+1}(\xi)
\eeaa
for $\xi\in\bR$.
From  Proposition \ref{prop}, we obtain $\psi_{k+1,p}(\xi)\le \psi_{k+1}(\xi)$ for all $\xi\in\bR$.
This implies that $\psi_{k+1}$ is non-decreasing.

%%%
Furthermore, if there is $\xi^{*}\in\bR$ such that $\psi_{k+1,p}(\xi^{*})= \psi_{k+1}(\xi^{*})$ holds,
then by applying Proposition \ref{prop}, we deduce
\beaa
\psi_{k+1}(\xi)=\psi_{k+1}(\xi-p)
\eeaa
for all $\xi\le \xi^{*}$.
Since we have Lemma \ref{lem:seq} (ii) and $\dlim_{\xi\to-\infty} \uphi(\xi)=\dlim_{\xi\to-\infty}\lphi(\xi)=0$,
$\psi_{k+1}$ must be zero on $(-\infty,\xi^{*}]$.
However, this is a contradiction of Lemma \ref{lem:seq} (ii) because $\lphi$ is non-zero on $(-\infty,\xi_0+\ep]$.
Thus, $\psi_{k+1}(\xi)>\psi_{k+1}(\xi-p)$ holds for all $p>0$ and $\xi\in\bR$.
This implies that $\psi_{k+1}$ is an increasing function.
\end{proof}

\medskip
%%%
\begin{lemma}\label{lem:ord}
    For $j\in\bN$, we have
    $\psi_{j}(\xi) \le \psi_{j-1}(\xi)$ for all $\xi\in\bR$.
\end{lemma}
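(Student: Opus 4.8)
The plan is to argue by induction on $j$, each step being an application of the maximum principle (Proposition \ref{prop}) to the difference of two consecutive iterates. The key algebraic observation is that $\psi_j = G*(\kappa^2\psi_{j-1}+f(\psi_{j-1}))$ together with Proposition \ref{lem:exi} gives
\beaa
L\psi_j(\xi) &=& -\left(\kappa^2\psi_{j-1}(\xi)+f(\psi_{j-1}(\xi))\right)\quad(\xi\in\bR)
\eeaa
for every $j\ge 1$, so that $L$ applied to a difference of iterates reduces to a difference of the nonlinearity $u\mapsto\kappa^2 u+f(u)$. Since $\kappa^2>\max_{u\in[0,1]}|f'(u)|$, this function is strictly increasing on $[0,1]$, and by Lemma \ref{lem:seq}~(iii) every $\psi_j$ takes values in $[0,1]$; this monotonicity is what converts an ordering of the iterates into an ordering of their images under $u\mapsto\kappa^2 u+f(u)$.

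For the base case $j=1$, I would first note that $\psi_1\in C^2_b(\bR)$ by Proposition \ref{lem:exi}, while $\uphi=\psi_0\in C^\infty(\bR)$ is bounded with bounded derivatives, so $\uphi-\psi_1\in C^1_b(\bR)\cap C^2(\bR)$. Using that $\uphi$ is an upper solution, \eqref{inq:up} rearranges to $L\uphi(\xi)\le -(\kappa^2\uphi(\xi)+f(\uphi(\xi)))=L\psi_1(\xi)$, hence $L(\uphi-\psi_1)\le 0$ on $\bR$. The boundary behaviour is immediate: $\uphi$ and $\psi_1$ have the same limits at $\pm\infty$ (namely $0$ at $-\infty$ and $1$ at $+\infty$) by Lemma \ref{lem:up} and Lemma \ref{lem:seq}~(ii), so $\liminf_{\xi\to-\infty}(\uphi-\psi_1)\ge 0$ and $\limsup_{\xi\to+\infty}(\uphi-\psi_1)\ge 0$. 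Proposition \ref{prop} then yields $\psi_1\le\uphi=\psi_0$.

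For the inductive step, assume $\psi_k\le\psi_{k-1}$ for some $k\ge 1$. Both iterates lie in $C^2_b(\bR)$ by Lemma \ref{lem:seq}~(i), so $\psi_k-\psi_{k+1}\in C^1_b(\bR)\cap C^2(\bR)$, and
\beaa
L(\psi_k-\psi_{k+1})(\xi) &=& \left(\kappa^2\psi_k(\xi)+f(\psi_k(\xi))\right)-\left(\kappa^2\psi_{k-1}(\xi)+f(\psi_{k-1}(\xi))\right) \\
&\le& 0,
\eeaa
using the induction hypothesis together with $\psi_k(\xi),\psi_{k-1}(\xi)\in[0,1]$ (Lemma \ref{lem:seq}~(iii)) and the monotonicity of $u\mapsto\kappa^2 u+f(u)$ on $[0,1]$. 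Since $\psi_k-\psi_{k+1}$ tends to $0$ at $\pm\infty$ by Lemma \ref{lem:seq}~(ii), the hypotheses of Proposition \ref{prop} are again met, and we conclude $\psi_{k+1}\le\psi_k$, closing the induction.

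There is no serious obstacle here; this is the classical monotone-iteration bootstrap. The only points needing a little care are: (a) verifying that each $\psi_k-\psi_{k+1}$ (and $\uphi-\psi_1$) has the regularity $C^1_b(\bR)\cap C^2(\bR)$ required by Proposition \ref{prop}, which is supplied by Proposition \ref{lem:exi} and Lemma \ref{lem:seq}~(i); and (b) keeping the iterates within $[0,1]$ so that the sign of the derivative $\kappa^2+f'$ of the nonlinearity is controlled, which is exactly Lemma \ref{lem:seq}~(iii). Combined with Lemma \ref{lem:seq}, the present lemma shows that $\{\psi_j\}$ is a non-increasing sequence of increasing functions bounded below by $\lphi$, the monotone structure needed to pass to the limit and recover an asymptotic traveling wave solution.
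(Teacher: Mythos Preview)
Your proof is correct and follows essentially the same route as the paper: induction on $j$, with the inductive step reducing to the monotonicity of $u\mapsto\kappa^2 u+f(u)$ and an application of Proposition~\ref{prop}. The only cosmetic difference is in the base case: the paper simply invokes Lemma~\ref{lem:seq}~(iii), which already contains $\psi_1\le\uphi=\psi_0$, whereas you re-derive this instance directly from the upper-solution inequality for $\uphi$.
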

%%%
\begin{proof}
The statement holds for the case $j=1$ from Lemma \ref{lem:seq} (iii).
Suppose that the statement holds for $j=k\ge 1$.
From the definition of $\psi_{j}\ (j\in\bN)$ and Proposition \ref{lem:exi},
we deduce
\beaa
L\psi_{k+1} = -\kappa^2 \psi_{k} - f(\psi_{k})
\ge  -\kappa^2 \psi_{k-1} - f(\psi_{k-1}) = L \psi_{k}.
\eeaa
By applying Proposition \ref{prop}, we obtain the statement.
\end{proof}

\medskip
%%%
Since $\{\psi_j\}_{j\in\bN}$ is a monotone sequence from Lemmas \ref{lem:seq} and \ref{lem:ord}, it converges point-wise to {\red a non-decreasing function} $\phi$ satisfying
\be\label{ine:conv}
\lphi(\xi) \le \phi(\xi) \le \uphi(\xi)
\ee
from the monotone convergence theorem and Helly's selection theorem.
Furthermore, we have
\beaa
\phi(\xi) = \dlim_{j\to\infty} \psi_j(\xi) = \dlim_{j\to\infty} G*(\kappa^2 \psi_{j-1}+f(\psi_{j-1}))(\xi) = G*(\kappa^2 \phi+f(\phi))(\xi)
\eeaa
for all $\xi\in\bR$ by using the dominated convergence theorem.
Since $G$ is continuous and $\phi$ is bounded,
$\phi\in C_b(\bR)$ holds.
Hence, $\phi$ belongs to $C^2_b(\bR)$ and is a solution to \eqref{eq:tw} from Proposition \ref{lem:exi}.

%%%
{\red It is easy to see that $\phi$ is an increasing function from the same argument of the proof of Lemma \ref{lem:seq}.}
From \eqref{ine:conv} and Lemmas \ref{lem:up} and \ref{lem:low},
we obtain $\dlim_{\xi\to-\infty}\phi(\xi)=0$.
Moreover, $\phi$ is bounded and increasing,
thus we know that there exists a limit $\phi^{+}:= \dlim_{\xi\to+\infty}\phi(\xi)$ satisfying $0\le \phi^{+}\le 1$.
Then, the condition of $\phi^{+}$ is derived as
\beaa
\phi^{+}=\dlim_{\xi\to+\infty}\phi(\xi) = \dlim_{\xi\to+\infty} G*(\kappa^2 \phi+f(\phi))(\xi) = \dfrac{1}{\kappa^2} (\kappa^2 \phi^{+}+f(\phi^{+})).
\eeaa
This implies $f(\phi^{+})=0$, and so $\phi^{+}=1$ holds.
Therefore, $\phi$ is a solution to \eqref{eq:tw} satisfying \eqref{boundary}.

%%%
Finally, from Lemmas \ref{lem:up} and \ref{lem:low},
we have
\beaa
\dlim_{\xi\to-\infty} e^{-\lambda_1\xi} \phi(\xi)= R(\ep\lambda_1).
\eeaa
By translating $\phi$ appropriately, the limit can be determined to be $1$.
Thus, Theorem \ref{thm:main} for $c>c^{*}_{\alpha}$ is proved.

%%%
\medskip
\subsection{Proof of Theorem \ref{thm:main} in the case $c=c^{*}_{\alpha}$}
Let $\{c_j\}_{j\in\bN}$ be a minimizing sequence for $c^{*}_{\alpha}$ and $\{\phi_j\}_{j\in\bN}$ be the solution to \eqref{eq:tw} corresponding to $c_j>c^{*}_{\alpha}$ constructed by Subsection \ref{subsec:fast}.
For convenience, we take $\phi_j(0)= \frac{1}{2}$ for all $j\in\bN$ by shifting the spatial variable.
Since $\{\phi_j\}_{j\in\bN}$ is a bounded sequence of increasing functions in $C^{2,\beta}(\bR)$ from the similar argument for the elliptic regularity,
there exists a subsequence $\{\phi_{j_k} \}_{k\in\bN}$ such that $\phi_{j_k}$ converges to {\red a non-decreasing} function $\phi^{*}$ in $C^{2,\beta}_{loc}(\bR)$ which satisfies \eqref{eq:tw} with $c=c^{*}_{\alpha}$.
Furthermore, $\phi^{*}$ have the limits $\phi^{*\pm} = \dlim_{\xi\to\pm\infty}\phi^{*}(\xi)$, and so $\phi^{*\pm}$ satisfy $f(\phi^{*\pm})=0$ and $0\le \phi^{*-}< \phi^{*+}\le 1$.
This implies $\phi^{*-}=0$ and $\phi^{*+}=1$.
{\red Moreover, $\phi^*$ is an increasing function from the same argument of the proof of Lemma \ref{lem:seq}.}
Thus, $\phi^{*}$ is a desired solution.

%%%
\bigskip
\section{Numerical experiments}

Numerical experiments are performed to investigate the spatio-temporal dynamics of the solution to equation \eqref{eq:main}.

%%%
\subsection{Numerical scheme and setting}\label{subsec:num}
We first introduce the numerical scheme for the simulation.

%%%
Let $\delta_t>0$ and $u_j(x):= u(j\delta_t,x)$ for $j\ge 0$.
We apply the time difference proposed by \cite{LX}:
\beaa
\partial^{\alpha}_t u((j+1)\delta_t,x) &=& \dfrac{1}{\Gamma(1-\alpha)} \dsum^{j}_{m=0} \int^{(m+1)\delta_t}_{m\delta_t} \dfrac{\partial u}{\partial s}(s,x) \dfrac{ds}{\{((j+1)\delta_t- s)\}^{\alpha}} \\
&\simeq& \dfrac{1}{\Gamma(1-\alpha)} \dsum^{j}_{m=0}
\dfrac{u_{m+1}(x)-u_m(x)}{\delta_t} \int^{(m+1)\delta_t}_{m\delta_t} \dfrac{ds}{\{((j+1)\delta_t- s)\}^{\alpha}}.
\eeaa
Following the straightforward computation of \cite{LX},
the discretization is represented by
\beaa
\partial^{\alpha}_t u((j+1)\delta_t,x)
&\simeq& \dfrac{1}{\Gamma(2-\alpha)} \dsum^{j}_{m=0}
\dfrac{u_{j+1-m}(x)-u_{j-m}(x)}{\delta_t} [(m+1)^{1-\alpha}-m^{1-\alpha}].
\eeaa
Thus, when $u_{m}(x)\ (m=0,1,\ldots,j)$ are given,
we obtain $u_{j+1}(x)$ by using the time difference equation
\beaa
\dfrac{1}{\Gamma(2-\alpha)} \dsum^{j}_{m=0}
\dfrac{u_{j+1-m}(x)-u_{j-m}(x)}{\delta_t} [(m+1)^{1-\alpha}-m^{1-\alpha}] = (u_{j+1})_{xx}(x) + f(u_{j}(x)).
\eeaa

%%%%%
Next, let $\delta_x>0$, $k_{x}\in\bN$, and $u_{j,k}:= u(j\delta_t,k\delta_{x})$ for $0 \le k \le k_x$.
For $0\le k \le k_x$,
we approximate $(u_{j+1})_{xx}(k\delta_x)$ as
\beaa
(u_{j+1})_{xx}(k\delta_x) \simeq \dfrac{u_{j+1,k+1}-2 u_{j+1,k} + u_{j+1,k-1}}{\delta^2_x}.
\eeaa
Thus, spatial discretization yields
\beaa
&& \dfrac{1}{\Gamma(2-\alpha)} \dsum^{j}_{m=0}
\dfrac{u_{j+1-m,k}-u_{j-m,k}(x)}{\delta_t} [(m+1)^{1-\alpha}-m^{1-\alpha}] \\
&&\hspace{3cm} = \dfrac{u_{j+1,k+1}-2 u_{j+1,k} + u_{j+1,k-1}}{\delta^2_x} + f(u_{j,k}).
\eeaa
When considering the Neumann boundary condition
\beaa
\dfrac{\partial u}{\partial x}(0) = \dfrac{\partial u}{\partial x}(k_x\delta_x)=0,
\eeaa
we set $u_{j,-1}:=u_{j,1}$ and $u_{j,k_x+1}:=u_{j,k_x-1}$ for $j\ge 0$.
Numerical experiments are performed with this scheme.

%%%
For the simulation, we solve the problem
\beaa
\begin{cases}
\partial^{\alpha}_{t} u = u_{xx} + u (1-u)\quad (t>0,\ 0<x<l), \\
\dfrac{\partial u}{\partial x}(t,0) = \dfrac{\partial u}{\partial x}(t,l)=0\quad (t>0), \\
u(0,x)=u_0(x)\quad (0<x<l),
\end{cases}
\eeaa
where $l>0$ and
\beaa
u_0(x)=
\begin{cases}
0 & 0<x<l_0, \\
\omega & l_0\le x < l
\end{cases}
\eeaa
for some $l_0\in(0,l)$.
We compute the profile of the solution when $T>0$ in the following step.
\begin{itemize}
    \item[(i)] Compute the point $x^{*}(t)$ such that $u(t,x^{*}(t))=0.1$ for all $t>0$;
    \item[(ii)] Find the minimal $T>0$ such that $x^{*}(T)<x_0$ for a given constant $x_0\in (0,l)$.
\end{itemize}
Moreover, the propagation speed of a numerical solution for $\alpha$ is computed by
\beaa
c^{num}_{\alpha} := \dfrac{x^{*}(T)-x^{*}(T-\delta_t)}{\delta_t}.
\eeaa

\medskip
%%%%
\subsection{Numerical results}
The profile of the numerical solution is shown in Fig. \ref{fig:pro}.
Initially, the front of the solution appears and moves from right to left in the spatio-temporal plots.
In addition, the position of the front is linear with respect to time, indicating that the propagation advances at a rate $O(t)$.
This suggests that the solutions exhibit behavior similar to that of a traveling wave solution as time progress.
Moreover, we see that the front profile decays to 0 rapidly and approaches 1 slowly.
This expected behavior reflects kinetics, where the system leaves the unstable state exponentially and approaches the stable state with an algebraic decay.

%%%%%
\begin{figure}[bt]
\begin{center}
	\includegraphics[width=12cm]{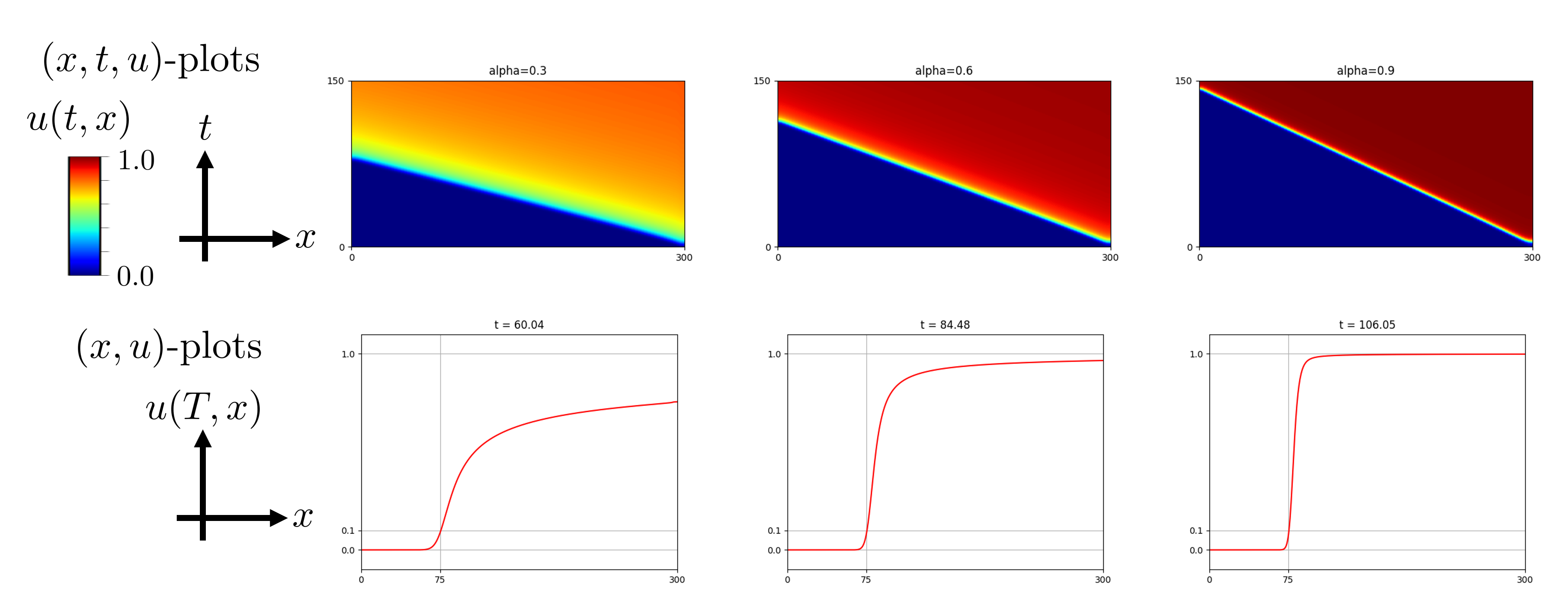}
\end{center}
\caption{\small{
Spatio-temporal plots (top panels) and profiles of the numerical solutions (bottom panels).
The parameters used are $l=300,\ l_0 = 295,\ \omega=0.01$, and $x_0 = 75$.
The numerical simulations correspond to $\alpha=0.3$ (left), $\alpha=0.6$ (middle), and $\alpha=0.9$ (right).
The solution profiles are shown at $T=60.04$ (left), $T=84.48$ (middle), and $T=106.05$ (right).
See Subsection \ref{subsec:num} for how to determine $T$.
}}
\label{fig:pro}
\end{figure}
%%%%%

%%%
Next, we discuss the propagation speed of the numerical solution.
In the case of the Fisher-KPP equation (i.e. $\alpha=1$),
it is known that when the initial state has a compact support or is a Heaviside function,
the propagation speed of the solution is characterized by a minimal speed at which a traveling wave solution with a monotone positive profile exists \cite{AW, KPP}.
In KPP-type nonlinearity, the minimal speed for traveling waves is equal to the minimal linear speed, $c^{*}_1=2\sqrt{f'(0)}$.
Here, the minimal linear speed is the minimal speed at which a positive solution $\phi(\xi)=e^{\lambda \xi}\ (\lambda>0)$ exists in the linearized equation
\beaa
c\phi'(\xi)= \phi''(\xi) + f'(0) \phi(\xi)\quad (\xi\in\bR)
\eeaa
around the unstable state $u=0$.
Then, $\lambda>0$ must satisfy $\lambda^2 -c\lambda + f'(0)=0$,
and it thus has a positive root $\lambda$ if and only if $c\ge c^{*}_1$.
In general, the property that the minimal speed of a traveling wave coincides with the minimal linear speed is called linear determinacy,
and it has been confirmed in the propagation phenomena of various reaction-diffusion systems \cite{AW, CLW, KPP}.

%%%

\medskip
Let us assume that a similar hypothesis holds for the behavior of the solution of equation \eqref{eq:main} by replacing the traveling wave solution with an asymptotic traveling wave solution.
In other words, the propagation speed of the solution is regarded as being characterized by the minimal speed for the asymptotic traveling wave solution.
We have not yet obtained results on the minimal speed, but we know from Theorem \ref{thm:main} that there exists at least an asymptotic traveling wave solution with $c\ge c^{*}_{\alpha}$.
Moreover, from Lemma \ref{lem:poly},
we have the linear minimal speed $c^{*}_{\alpha}$.
Therefore, if linear determinacy holds, the propagation speed is expected to be characterized by $c^{*}_{\alpha}$.

%%%%

\medskip
A graph of $c^{*}_{\alpha}$ and the propagation speed of the numerical solution $c^{num}_{\alpha}$ is shown in Fig. \ref{fig:speed} (a).
It is seen that $c^{num}_{\alpha}$ is close to $c^{*}_{\alpha}$.
The relative errors are shown in Fig. \ref{fig:speed} (b).
The error is less than 0.1 and comparatively small in the range where $\alpha$ is close to 1.
This result suggests that $c^{*}_{\alpha}$ corresponds to the propagation speed of the solution in some sense.

%%%%%
\begin{figure}[bt]
\begin{center}
	\includegraphics[width=10cm]{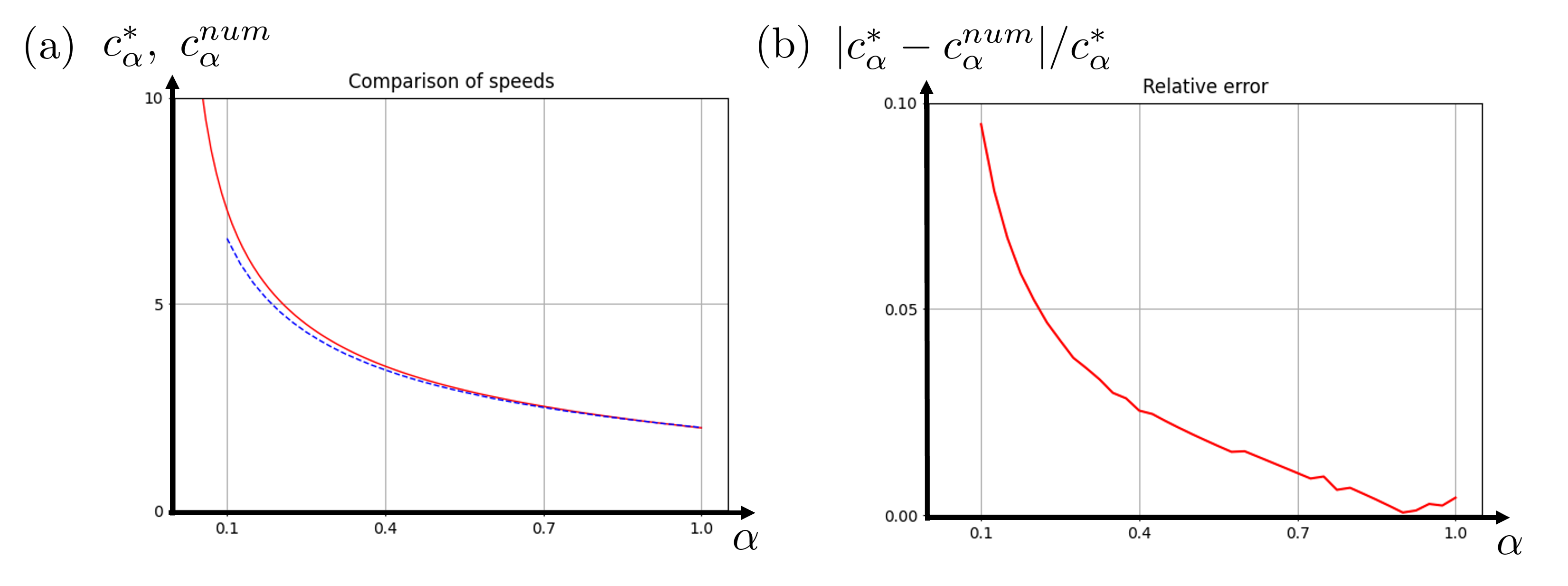}
\end{center}
\caption{\small{
(a) Comparison of $c^{*}_{\alpha}$ (red line) and the propagation speed of the numerical solution (dashed line).
The parameters are $l=300,\ l_0=295,\ \omega=0.01$, and $x_0 = 75$.
The blue dashed line is constructed by connecting straight line segments computed for $\alpha=0.1 + 0.025\times k\ (0\le k \le 36)$.
(b) Relative errors.}}
\label{fig:speed}
\end{figure}
%%%%%

%%%
\bigskip
\section{Discussion}
In this paper, we discussed the effect of the time-fractional derivative on front solution propagation.
By introducing a monostable nonlinear term to the time-fractional diffusion equation, we demonstrated that the sub-diffusion property is not conserved with respect to the time order of propagation.
Specifically, we introduced the concept of an asymptotic traveling wave solution to characterize solution propagation.
Through the analysis of the asymptotic traveling wave solution, we gained insights into the dependence of $\alpha$ on the propagation speed and the shape of the solution, and thus gained an understanding the effect of the time-fractional derivative.

%%%
Although the asymptotic traveling wave solution appears essential object for analyzing solution propagation,
we have not yet constructed a solution that converges to this solution.
One challenge in proving convergence lies in determining the appropriate initial state.
As the asymptotic traveling wave solution is not a solution to equation \eqref{eq:main},
estimating the initial conditions for convergence backward is not straightforward.
A useful piece of information is that the asymptotic traveling wave solution serves as a sub-solution.
The existence of a converging solution might be established if a well-constructed super-solution is introduced.

%%%
Several issues remain unresolved regarding the asymptotic traveling wave solution.
One such issue is the solution's asymptotic behavior as the solution approaches a stable equilibrium.
As mentioned in Section 1, convergence to a stable equilibrium at an algebraic decay rate is anticipated in the case of time-fractional derivatives.
In a related work \cite{NVN}, although for a different problem setting,
a bistable discontinuous nonlinear term was used to provide a concrete representation of the traveling wave solution and to construct a solution that decays algebraically to a constant state.
It might be feasible to achieve asymptotic behavior that decays algebraically by adopting a constructive method, even in this problem setting, but this remains an open question for future investigation.

%%%
Finally, research related to sub-diffusion and proliferation/saturation effects is discussed.
This includes studies on proliferation models with random walks, which differ from normal diffusion \cite{Fedotov, FSSS, FSSS2}.
The time order of the propagation speed of the solution has been investigated,
and was found to be $O(t^{\alpha})$ for the model in \cite{FSSS2} and $O(t)$ for the model in \cite{Fedotov, FSSS}.
These differences arise from the nonlocality in the time direction that describes the effect of sub-diffusion.
As kinetics have the effect of propagating the solution in $O(t)$,
it is necessary to model the time nonlocality more strongly to propagate the solution in $O(t^{\alpha})$.
The finding that no sub-diffusion properties remain for the time order of the propagation in our setup is important information from a modeling point of view.
A mathematically rigorous approach to incorporating time non-locality into the proliferation model for sub-diffusion behavior would provide insights for modeling.

\bigskip
%%%
\section*{Acknowledgments}
The author expresses his sincere gratitude to Yikan Liu (Kyoto University)  for valuable advice, the referees for their valuable comments and suggestions for improving the paper, and Glenn Pennycook, MSc, from Edanz (https://jp.edanz.com/ac) for editing a draft of this manuscript.
This work was supported by JSPS KAKENHI Grant Number JP23K13013 and JP24H00188.

%%%%%%%%%%%%%%%%%%%%%%%%%%%%%%%%%%%%%%%%%%%%%%%%%


\begin{thebibliography}{99}


\bibitem{ACH} {\sc F. Achleitner, C. M. Cuesta, S. Hittmeir}, 
\newblock{\em Travelling waves for a non-local Korteweg–de Vries–Burgers equation}, 
\newblock Journal of Differential Equations
{\bf 257} (2014), 720-758.

\bibitem{AMRT}{\sc F. Andreu-Vaillo, J.  Maz\'{o}n, J. D. Rossi, J. J. Toledo-Melero},
\newblock{ Nonlocal diffusion problems},
\newblock Math. Surveys Monogr. {\bf 165}, AMS, Providence, RI, 2010.

\bibitem{AW}{\sc D.G. Aronson, H.F. Weinberger},
\newblock{\sc Multidimensional nonlinear diffusions arising in population genetics},
\newblock{\em Adv. Math.},
{\bf 30} (1978), 33-76.

\bibitem{BCC}{\sc P. W. Bates, X. Chen, A. Chmaj},
\newblock{\em Heteroclinic solutions of a van der Waals model with indefinite nonlocal interactions},
\newblock Calc. Var.,
{\bf 24} (2005), 261-281. 

\bibitem{BFRW}{\sc P. W. Bates, P. C. Fife, X. Ren, X. Wang}, 
\newblock {\em Traveling waves in a convolution model for phase transitions},
\newblock Arch. Ration. Mech. Anal.,
{\bf 138} (1997), 105-136.

\bibitem{BV}{\sc C. Bucur, E. Valdinoci},
\newblock{ Nonlocal diffusion and applications},
\newblock Springer, 2016.

\bibitem{CR}{\sc X. Cabr\'{e}, J.-M. Roquejoffre},
\newblock{\em The influence of fractional diffusion in Fisher-KPP equations},
\newblock Commun. Math. Phys.,
{\bf 320} (2013), 679–722.

\bibitem{CLW}{\sc C. Castillo-Chavez, B. Li, H. Wang},
\newblock{\em Some recent developments on linear determinacy},
\newblock Mathematical Biosciences and Engineering,
{\bf 10} (2013), 1419-1436.

\bibitem{CG}{\sc X. Chen, J.-S. Guo}
\newblock{\em Existence and Asymptotic Stability of Traveling Waves of Discrete Quasilinear Monostable Equations},
\newblock Journal of Differential Equations,
{\bf 184} (2002), 549–569

\bibitem{CA} {\sc C. M. Cuesta, F. Achleitner}, 
\newblock{\em Addendum to “Travelling waves for a non-local Korteweg–de Vries–Burgers equation” [J. Differential Equations 257 (3) (2014) 720–758]}, 
\newblock Journal of Differential Equations
{\bf 262} (2017), 1155-1160.

\bibitem{CD}{\sc J. Coville, L. Dupaigne},
\newblock{\em On a non-local equation arising in population dynamics},
\newblock Proc. R. Soc. Edinb.
{\bf 137A} (2007), 727-755.

\bibitem{EGIW}{\sc S.-I. Ei, J.-S. Guo, H. Ishii, C.-C. Wu}, 
\newblock{\em Existence of traveling waves solutions to a nonlocal scalar equation with sign-changing kernel},
\newblock Journal of Mathematical Analysis and Applications,
{\bf 487(2)}, (2020), 124007.

\bibitem{EK}{\sc S. Eidelman, A. Kochubei},
\newblock{\em Cauchy problem for fractional diffusion equations},
\newblock J. Differential Equations,
{\bf 199} (2004), 211-255.

\bibitem{Fisher}{\sc R. A. Fisher},
\newblock{\em The genetical theory of natural selection: a complete variorum edition (ed. J. H. Bennett)}, (Oxford University Press), (1999). 

\bibitem{Fisher2}{\sc R. A. Fisher},
\newblock{The wave of advance of advantageous genes},
\newblock Annals of eugenics 
{\bf 7} (1937), 355-369.

\bibitem{Fedotov}{\sc S. Fedotov},
\newblock{\em Non-markovian random walks and nonlinear reactions: subdiffusion and propagating fronts},
\newblock Physical Review E,
{\bf 81} (2010), 011117.
 
\bibitem{FM}{\sc S. Fedotov, V. M\'{e}ndez},
\newblock{\em Continuous-time random walks and traveling fronts},
\newblock Physical Review E,
{\bf 66} (2002), 030102(R).

\bibitem{FLY}{\sc G. Floridia, Y. Liu and M. Yamamoto},
\newblock{\em Blowup in $L^1(\omega)$-norm and global existence for time-fractional diffusion equations with polynomial semilinear terms},
\newblock Adv. Nonlinear Anal. 
{\bf 12} (2023), 20230121, 15 pp.


\bibitem{FSSS}{\sc D. Froemberg, H. Schmidt-Martens, I. M. Sokolov, F. Sagu\'{e}s},
\newblock{\em Front propagation in A + B $\to$ 2A reaction under subdiffusion},
\newblock Physical Review E,
{\bf 81} (2008), 011128.

\bibitem{FSSS2}{\sc D. Froemberg, H. Schmidt-Martens, I. M. Sokolov, F. Sagu\'{e}s},
\newblock{\em Asymptotic front behavior in an A + B $\to$ 2A reaction under subdiffusion},
\newblock Physical Review E,
{\bf 83} (2011), 031101.

\bibitem{Garnier}{\sc J. Garnier},
\newblock{\em Accelerating solutions in integro-differential equations},
\newblock SIAM J. Math. Anal.
{\bf 43} (2011), 1955–1974.

\bibitem{GZ}{C. Gui, M. Zhao},
\newblock{\em Traveling wave solutions of Allen-Cahn equation with a fractional Laplacian},
\newblock Ann. I. H. Poincar\'{e},
{\bf 32} (2015), 785-812.

\bibitem{HLW}{\sc B. I. Henry, T. A. Langlands, S. L. Wearne}
\newblock{\em Anomalous diffusion with linear reaction dynamics: From continuous time random walks
to fractional reaction-diffusion equations},
\newblock Physical Review E,
{\bf 74} (2006), 031116.

\bibitem{HMMV}{\sc V. Hutson, S. Martinez, K. Mischaikow, G.T. Vickers}, 
\newblock{\em The evolution of dispersal}, 
\newblock J. Math. Biol. 
{\bf 47} (2003), 483-517.

\bibitem{HMS}{\sc H. J. Haubold, A. M. Mathai, R. K. Saxena},
\newblock{\em Mittag-Leffler functions and their applications},
\newblock J. Appl. Math.
\newblock (2011) 298628.

\bibitem{HN}{\sc F. Hamel and N. Nadirashvili},
\newblock{\em Entire solutions of the KPP equation},
\newblock Comm. Pure Appl. Math. 
{\bf 52} (1999), 1255–1276.

\bibitem{KPP}{\sc A. N. Kolmogorov, I. G. Petrovsky and N. S. Piskunov}, 
\newblock{\em Etude de l’equation de la diﬀusion avec croissance de la quantite de matiere et son application a un probleme biologique},
\newblock Bjul. Moskow. Gos. Univ. 
{\bf A1} (1937), 1-26.

\bibitem{KRY}{\sc A. Kubica, K. Ryszewska, M. Yamamoto},
\newblock{\em  Time-Fractional Differential Equations: A theoretical introduction},
\newblock Springer 2020.

\bibitem{LX}{\sc Y. Lin, C. Xu},
\newblock{\em Finite difference/spectral approximations for
the time-fractional diffusion equation},
\newblock J. Computational Physics,
{\bf225} (2007), 1533-1552.

\bibitem{LY}{\sc Y. Luchko and M. Yamamoto},
\newblock{\em Comparison principles for the time-fractional diffusion equations with the Robin boundary conditions. Part I: Linear equations},
\newblock Fract. Calc. Appl. Anal.,
{\bf 26} (2023), 1504-1544.

\bibitem{Ma} {\sc S. Ma},
\newblock{\em Traveling wavefronts for delayed reaction-diffusion systems via a fixed point theorem},
\newblock J. Differential Equations,
{\bf171} (2001), 294-314.

\bibitem{Mainardi}{\sc F. Mainardi},
\newblock{\em The fundamental solutions for the fractional diffusion-wave equation},
\newblock Appl. Math. Lett.
{\bf 9} (1996), 23–28.

\bibitem{NVN}{\sc Y. Nec, V. A. Volpert, A. A. Nepomnyashchy},
\newblock{\em Front propagation problems with sub-diffusion},
\newblock Discrete and Continuous Dynamical Systems,
{\bf 27} (2010), 827-846.

\bibitem{SLW}{\sc Y.-J. Sun, W.-T. Li, Z.-C. Wang},
\newblock{\em Traveling waves for a nonlocal anisotropic dispersal equation with monostable nonlinearity},
\newblock Nonlinear Analysis,
{\bf 74} (2011), 814-826.

\bibitem{VNK}{\sc V.A. Volpert, A.A. Nepomnyashchy, A. A. Golovin},
\newblock{\em Front-type solutions of fractional Allen–Cahn equation},
\newblock Physica D: Nonlinear Phenomena,
{\bf 237} (2008), 3237-3251.

\bibitem{VNN-F}{\sc V.A. Volpert, Y. Nec, A.A. Nepomnyashchy},
\newblock{\em Exact solutions in front propagation problems with superdiffusion},
\newblock Physica D: Nonlinear Phenomena,
{\bf 239} (2010), 134-144.

\bibitem{VNN}{\sc V. A. Volpert, Y. Nec, A. A. Nepomnyashchy},
\newblock{\em Fronts in anomalous diffusion-reaction systems},
\newblock Phil Trans R Soc A,
{\bf 371} (2013), 20120179.

\bibitem{Ya} {\sc H. Yagisita}, 
\newblock{\em Existence and nonexistence of traveling waves for a nonlocal monostable equation}, 
\newblock Publ. Res. Inst. Math. Sci., 
{\bf45} (2009), 925-953.

\end{thebibliography}
\end{document}